\def\ring#1{\ifmmode \mathaccent'027 #1\else \rm\accent'027 #1\fi}
\newcommand{\RR}{\mathbb{R}}
\newcommand{\CC}{\mathbb{C}}
\newcommand{\ZZ}{\mathbb{Z}}
\newcommand{\FFF}{\mathcal{F}}
\newcommand{\id}{\mathrm{id}}
\newcommand{\rel}{\mathrm{rel \;}}
\newcommand{\Diff}{\mathrm{Diff}}
\newcommand{\noin}[1]{[#1, G/Top]}
\newcommand{\OO}[1]{\mathrm{O}(#1)}
\newcommand{\SO}[1]{\mathrm{SO}(#1)}
\newcommand{\tinycirc}{{^{{\,}_{\text{\rm o}}}}}
\newtheorem{thm}{Theorem}
\newtheorem*{claima}{Claim}
\newtheorem{asrt}{Assertion}
\newtheorem{obs}[asrt]{Observation}
\newtheorem{hyp}[asrt]{Hypothesis}
\newtheorem{lemma}[asrt]{Lemma}
\newtheorem{prop}[asrt]{Proposition}
\numberwithin{asrt}{section} 
\begin{document}
\setcounter{page}{1}

\subjclass[2000]{Primary: 57R67, 57S05, 57S17. Secondary: 19J25, 57S25.}

\title{Finite Symmetries of $S^4$}
\author{Weimin Chen, S{\l}awomir Kwasik and Reinhard Schultz}
\maketitle

\begin{abstract}
This paper discusses topological and locally linear actions of finite groups on $S^4$. 
Local linearity of the orientation preserving actions on $S^4$ forces 
the group to be a subgroup of $\SO{5}$. On the other hand, orientation reversing topological 
actions of ``exotic" groups $G$ ({\it i.e.} $G\not\subset \OO{5}$) on $S^4$ are constructed, and
local linearity and stable smoothability of the actions are studied. 
\end{abstract}

\vspace{1cm}

In the theory of transformation groups, one class of closed manifolds plays a 
special role;  namely, standard spheres $S^{n}$.  Classical results describe all finite 
group actions on $S^1$ and $S^2$; all such actions are conjugate to the linear ones.

The existence of wild topological actions on $S^3$ ({\it cf.} \cite{bredon}) suggests restricting 
attention to group actions which are more regular. Consequently, one usually requires the
action to be smooth or more generally locally linear ({\it cf.} \cite{bredon}). If one restricts to 
locally linear, say orientation preserving, actions on $S^3$ (which are smoothable by 
\cite{KwLee}), then again all these actions are topologically conjugate to linear actions ({\it cf.} 
\cite{DL}, \cite{MT2}). In particular, if $G$ is a finite group acting locally linearly,
orientation-preservingly (and effectively) on $S^3$, then $G$ is isomorphic to a subgroup of $\SO{4}$. 
This is a highly nontrivial result involving 
Perelman's work on the Thurston's Geometrization Conjecture ({\it cf.} \cite{BLP}, \cite{MT2}). Strangely enough the fact that $G\subset \SO{4}$, as far as we know, does not have a direct proof (i.e. without using the linearity of the action). It should be pointed out that a locally linear action of $G$ on $S^3$ is also 
topologically conjugate to a linear action in the orientation reversing case ({\it cf.} \cite{DL}). In this case 
$G\subset \OO{4}$, but the main difficulties are related to the orientation preserving case which has attracted most of the attention ({\it cf.} \cite{SmithConj}).

For $S^4$ there are smooth, orientation preserving actions of $G=\ZZ_p$ which are not 
conjugate to linear ones (not even topologically). The existence of nonlinear finite group 
actions on $S^4$ still left
open possibility that these groups are subgroups of the corresponding orthogonal groups. 
In particular, there are two appealing conjectures ({\it cf.} \cite{MZ}, \cite{Z2005}, \cite{MZ0}). 

\vspace{3mm}

\noindent{\bf Conjecture 1.} 
{\it If a finite group $G$ acts smoothly (or locally linearly) and orientation preservingly on
$S^4$, then $G$ is isomorphic to a subgroup of $\SO{5}$.
}

\vspace{3mm}

\noindent{\bf Conjecture 2.} 
{\it If $G$ acts orientation reversingly on $S^4$, then $G$ is isomorphic to a subgroup of 
$\OO{5}$.
}

Significant progress toward the proof of Conjecture 1 was made in \cite{MZ}. Our first main result is
the completion of the proof of Conjecture 1. In particular, the following holds. 

\begin{thm}\label{thm:A}
If a finite group $G$ acts locally linearly and orientation 
preservingly on a homology $4$-sphere $\Sigma$, then $G$ is isomorphic 
to a subgroup of $\SO{5}$. 
\end{thm}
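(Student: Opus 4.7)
The plan is to build on the substantial reduction already achieved by Mecchia and Zimmermann~\cite{MZ}, which narrows Conjecture~1 to a short list of residual finite groups that could, a priori, act locally linearly and orientation preservingly on a homology $4$-sphere without being isomorphic to a subgroup of $\SO{5}$. The strategy is therefore to eliminate (or linearly realize) each remaining candidate by combining Smith-theoretic constraints on the fixed point sets of elements of prime order with the $g$-signature information provided by local linearity.

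The first step exploits the rigidity of low-dimensional actions. For an involution $\tau \in G$, local linearity together with Smith theory forces the fixed set $\Sigma^\tau$ to be a locally flat $\ZZ_2$-homology sphere of dimension $0$, $2$, or $4$; in the interesting two-dimensional case it is therefore $S^2$, and the centralizer $C_G(\tau)/\langle\tau\rangle$ acts locally linearly, hence linearly, on it, so this quotient embeds in $\OO{3}$. An analogous argument for an element of odd prime order $p$, using $\ZZ_p$-coefficient Smith theory, constrains the normalizers and centralizers of $p$-elements via the classification of locally linear actions on $S^2$ and on $0$-spheres. Together, these embeddings pin down the $p$-local structure of $G$ for every prime $p$ dividing $|G|$ and match it to the $p$-local structure of known finite subgroups of $\SO{5}$.

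For the cases not settled by the $p$-local analysis alone, the second step invokes the $G$-signature theorem for $\Sigma/G$. Local linearity provides a complex slice representation at each isolated fixed point of a cyclic subgroup, and the Atiyah--Singer $g$-signature formula yields rigid arithmetic identities relating these slice characters to the global signature of $\Sigma$, which vanishes because $\Sigma$ is a homology $4$-sphere. Combined with the $p$-local constraints above and the representation theory of the candidate group $G$ in dimensions $\le 5$, these identities should either produce a numerical contradiction or promote the abstract constraints to an explicit embedding $G \hookrightarrow \SO{5}$ induced by a genuine orthogonal representation.

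The hardest step, I expect, is dealing with the remaining non-solvable candidates not handled in \cite{MZ}, in particular certain simple groups and their central extensions. For such $G$ the centralizer of a single involution does not pin down the ambient group, and one must reconcile the $2$-local Smith-theoretic data with the $g$-signature contributions from odd order elements simultaneously. The key global constraint to exploit will be that the fixed $S^2$'s of the various involutions must assemble into an orbit structure compatible with the induced $G$-action on $H^\ast(\Sigma;\QQ)$ and with the slice data at their common intersection points; showing this system of constraints is incompatible with any "exotic" candidate, while consistent only with the known subgroups of $\SO{5}$, is where the essential new work will lie.
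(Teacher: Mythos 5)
There is a genuine gap, and it begins with a misidentification of what is actually left to prove. Mecchia and Zimmermann already settled the non-solvable case completely in \cite{MZ0}, \cite{MZ}; the residual indeterminacy that Theorem~\ref{thm:A} must remove lives entirely in the \emph{solvable} case, and it is extremely specific: there is an index two subgroup $H\subset G$ with $H$ isomorphic to a subgroup of $\SO{4}$, $\Sigma^H=\{p_+,p_-\}$, and $\Sigma^G=\emptyset$, so every $\lambda\in G\setminus H$ swaps $p_+$ and $p_-$. Your plan devotes its ``hardest step'' to eliminating remaining simple groups and their central extensions, which is attacking a door that is already closed, while the actual problem --- showing that this particular index-two extension of a known subgroup of $\SO{4}$ is itself isomorphic to a subgroup of $\OO{4}\subset\SO{5}$ --- is not addressed. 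Your $p$-local analysis via centralizers of involutions acting on fixed $2$-spheres cannot by itself resolve this, because the issue is not the isomorphism type of $H$ (already known to lie in $\SO{4}$) but the precise way the coset $G\setminus H$ acts on $H$ by conjugation.

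Your instinct to bring in Atiyah--Singer type fixed-point information is in the right family: the paper's key tool is the Atiyah--Bott theorem, which says that for a semifree action with two fixed points the tangent representations at $p_+$ and $p_-$ are equivalent under an orientation-reversing isomorphism. But the way this is exploited is quite different from extracting ``arithmetic identities from the vanishing signature.'' The paper converts it into a purely algebraic obstruction (Lemma~\ref{lem:1}): an element $\lambda\in G\setminus H$ that swaps the two fixed points acts on the common tangent representation $V$ as an orientation-\emph{reversing} map intertwining $h$ with $h^{\pm1}$, and an explicit $2\times 2$ block determinant computation shows such a map must be orientation-preserving --- so $\lambda$ cannot normalize-up-to-inversion any cyclic subgroup acting semifreely with fixed set $\{p_+,p_-\}$. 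This is then combined with a Borel-formula case analysis on the rank of the maximal elementary abelian subgroup of the center of each Sylow subgroup of the Fitting subgroup of $G$ (ranks $1$, $2$, $3$), concluding either that every $\lambda\in G\setminus H$ is an involution (whence $H$ is abelian and $G$ embeds in $\SO{4}$ via complex conjugation on $\CC^2$), or that $F$ is cyclic, $H$ is dihedral, and $G$ embeds in $\OO{4}$. None of this structure is visible in your proposal, and without it the argument does not close.
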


It is however worthwhile to note the following:

\vspace{2mm}

\noindent{\bf Observation}. {\it There is a finite group $\pi$ acting topologically and orientation-preservingly on $S^4$
(semifreely with two fixed points) such that $\pi$ is not isomorphic to a subgroup of $\SO{5}$. 
}
\vspace{2mm}

Our remaining results concern Conjecture 2. In this direction we have the following: 

\vspace{2mm}

\noindent{\bf Fact.}
{\it There is a finite group $\pi$ acting topologically and orientation reversingly on $S^4$ without 
fixed points such that $\pi$ is not isomorphic to a subgroup of $\OO{5}$.
}
\vspace{2mm}

The above fact gives a topological solution (in negative) to Conjecture $2$. The locally linear (or smooth) case of
Conjecture $2$ is 
still open. To be more precise,  we start with a construction of topological,
orientation reversing, pseudo-free actions of groups $Q(8p,q)$, for certain pairs of $(p,q)$,
on $S^4$. Note that $Q(8p,q)\not\subset \OO{5}$. Then we describe an approach for 
constructing potential counterexamples to the locally linear version of Conjecture 2 and then
discuss stable smoothability of these potential locally linear actions. This approach uses computational techniques involving surgery groups ({\it cf.} \cite{W2}, \cite{O}, \cite{Ma}) and algebraic number theory. It also leads to questions concerning free finite group actions on 
homology $3$-spheres which are of independent interest. Our results in this direction are summarized in the following:

\begin{thm}\label{thm:B}
There exist orientation reversing (pseudo-free) actions of groups $Q(8p,q)$, for some 
pairs $(p,q)$, on $S^4$. Suppose such an action $\Phi$ is locally linear. If 
$\Phi\times\mathrm{id}_\RR$ is not equivariantly smoothable, then there is also an 
action $\Phi^\prime$ of the same type 
such that $\Phi^\prime\times\mathrm{id}_\RR$ is equivariantly smoothable, 
and likewise if $\Phi\times\mathrm{id}_\RR$ {\sf is} equivariantly smoothable, 
then there is a locally linear action $\Phi^\prime$ of the same type such that for all
nonnegative integers $k$, the product action
$\Phi^\prime\times\mathrm{id}_{\RR^k}$ is {\sf not} equivariantly smoothable.
\end{thm}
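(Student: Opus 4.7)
The plan divides into three parts corresponding to the three assertions of the theorem.

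For the existence of the topological actions, I would begin with a free orientation-preserving action of $Q(8p,q)$ on a homology $3$-sphere $\Sigma^3$; such $\Sigma^3$ exists for the Milnor-permissible pairs $(p,q)$ satisfying the relevant number-theoretic conditions (building on Milnor, Swan and Wall). Using this as input, I would build a topological pseudo-free action on a homotopy $S^4$ via an equivariant controlled surgery construction starting from a suspension-type model, then apply Freedman's topological Poincar\'e theorem in dimension $4$ together with the Cannon-Edwards Double Suspension theorem to identify the underlying manifold with $S^4$. The orientation-reversing global action would be obtained by twisting the initial model by an orientation-reversing element, realized via a connected-sum move or an appropriate non-orientable covering of the orbit space, while preserving pseudo-freeness through a careful choice of the surgery data. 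The restriction to ``certain pairs $(p,q)$'' reflects which pairs allow the relevant $L$-theoretic and projective class group obstructions to vanish.

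For the classification statement, fix one such locally linear action $\Phi$. The set of locally linear orientation-reversing pseudo-free actions of the same topological type fits into an equivariant topological surgery exact sequence (\cite{W2}, \cite{O}, \cite{Ma}), and forms an affine space over an image of a suitable surgery group of $\ZZ Q(8p,q)$ with decoration and orientation twist determined by the action. The equivariant stable smoothing obstruction defines a Kirby-Siebenmann-type map from this structure set to a $\ZZ_2$-cohomology group of the orbit space, computed away from the singular set. The two alternatives in the theorem amount to showing that the image of this map contains both $0$ and a nonzero element, so that both stably smoothable and non-stably-smoothable representatives exist within a single topological type. Concretely, given $\Phi$ one would add a controlled surgery modification corresponding to a specific element of the relevant $L$-group to shift the Kirby-Siebenmann-type obstruction in the desired direction.

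For the strengthened non-smoothability after crossing with $\RR^k$, the essential point is that the Kirby-Siebenmann invariant is stable under products with Euclidean space. Hence a locally linear $\Phi^\prime$ whose stable smoothing obstruction is already detected by a nontrivial Kirby-Siebenmann class remains non-smoothable after crossing with any $\RR^k$. Producing such $\Phi^\prime$ reduces to the surjectivity portion of the previous paragraph together with a geometric identification of the algebraic obstruction with the topological Kirby-Siebenmann class.

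The main obstacle is the algebraic-number-theoretic computation: one must understand the relevant $L$-groups and projective class groups of $\ZZ Q(8p,q)$ with the correct orientation twist and decoration, and show both that the smoothing obstruction map from the structure set is not identically zero and that it is not identically nonzero. This requires detailed information about units in cyclotomic integers and involution-twisted class group contributions, together with a careful geometric argument matching the algebraic $L$-class with the topological smoothing obstruction. Compared with these, the construction of the initial action and the stability-under-stabilization argument are essentially standard once the surgery-theoretic framework is in place.
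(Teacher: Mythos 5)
Your plan correctly identifies the general framework (free $Q(8p,q)$-actions on homology $3$-spheres from the spherical space form program, a structure set for the resulting pseudo-free action, and a $\ZZ_2$-valued Kirby--Siebenmann-type obstruction separating the stably smoothable representatives from the rest), but it defers exactly the step that constitutes the proof. The paper's twin-pair statement rests on one concrete Claim: if $M$ is the orbit space of the free part and $c:M/\partial M\to S^4$ collapses the complement of a disk, then the surgery obstruction map $\sigma:[M/\partial,G/Top]\to L_0^h(Q(8p,q),w)$ vanishes on the image of $c^*$, where $w$ is the nontrivial orientation character. The generator of $\im c^*$ is the normal invariant of $(M\#|E_8|,\partial)\to(M,\partial)$ --- the unique element flipping the relative Kirby--Siebenmann invariant --- and its realizability by a homotopy equivalence is precisely what produces the partner action $\Phi'$ with the opposite stable smoothability. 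Establishing this vanishing is not a generic statement about ``understanding the relevant $L$-groups'': it requires reducing to the Sylow $2$-subgroup $Q_8$, identifying $L_0^h(Q_8,w')\cong\ZZ_2\oplus\ZZ_2$, and chasing the image of $L_0(1)$ through the Rothenberg sequence and the relative $L$-groups of $\ZZ_4\subset Q_8$ using results of Wall, Hambleton and Cappell--Shaneson. You name this as ``the main obstacle'' but offer no route through it, and your reformulation (``show the obstruction map is neither identically zero nor identically nonzero on the structure set'') is a restatement of the problem rather than a step toward its solution.

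Two further points would fail as written. First, the stability assertion ``the Kirby--Siebenmann invariant is stable under products with $\RR^k$, hence $\Phi'\times\id_{\RR^k}$ stays non-smoothable'' is too quick in the equivariant setting: the obstruction lives on the complement of the singular orbits, and to derive a contradiction from a hypothetical equivariant smoothing of $\Sigma^4\times\RR^k$ the paper must identify an invariant tubular neighborhood of the singular set with the linear model (via topological rigidity of semifree representations, Hambleton--Pedersen), compare the induced linear structure on the equivariant tangent bundle with the standard one through a proper equivariant $s$-cobordism (Siebenmann), and then apply Lashof--Rothenberg and Lashof--Taylor; the positive direction (trivial relative invariant implies smoothability of all $\Sigma^4\times\RR^k$, $k\geq1$) equally needs the smoothing-theory machinery rather than mere stability of the invariant. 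Second, the initial construction does not go through suspension models or the double suspension theorem: the paper forms a twisted $I$-bundle over the Poincar\'e complex with twisting given by $w$, solves a Cappell--Shaneson homology surgery problem to get a one-ended $4$-manifold whose universal cover is $S^3\times\RR$ by Freedman, and takes the two-point compactification; the pseudo-freeness with isotropy $Q(4pq)$ at the two added points comes from the index-two kernel of $w$, not from a connected-sum or covering-space move.
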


In other words, the potential locally linear actions of $Q(8p,q)$ on $S^4$ come in twin
pairs, such that one is stably equivariantly smoothable but the other is not. 

In view of Theorem 2, we would like to advertise the smooth version of Conjecture 2,  stated
here as 

\vspace{3mm}

\noindent{\bf Problem 1.}
{\it  Let $G$ be a finite group acting smoothly and orientation reversingly on $S^4$. Is $G$ 
isomorphic to a subgroup of $\OO{5}$?
}
\vspace{3mm}

Our interest in the questions of this paper was also partly motivated by the paper of C. B.
Thomas \cite{Th}.  Namely, in \cite{Th} the existence
of exotic smooth actions of finite groups 
was used to show that the natural inclusions of $\OO{6}$ in the 
automorphism groups $Top(S^5)$ and ${\Diff}(S^5)$ are not homotopy
equivalences.  The exotic actions used by Thomas are actually by subgroups of $\SO{6}$. 
In particular these actions include the 
action first constructed by T. Petrie for the metacyclic group of order $21$, {\it cf.} \cite{Th}.
The constructions of this paper would then suggest 
that the inclusion of $\OO{5}$ in $Top(S^4)$ is also not a homotopy
equivalence, for the group actions constructed in this paper are by ``exotic" groups which
act on $S^4$ but obviously can not act linearly on $S^4$ {\sf because they are not 
subgroups of $\OO{5}$}.

In fact, one can use homotopy theoretic methods as in \cite{RS}
to show that the group $\pi_3(\,Top(S^n)\,)$ contains a copy of 
$\ZZ_2$ if $n\geq 4$, which immediately implies that 
the natural inclusion of $\OO{5}$ in $Top(S^4)$ is not a homotopy
equivalence.  On the other hand, if $m\leq 3$ the inclusions 
$\OO{m+1}\subset \Diff(S^m)\subset Top(S^m)$ {\sf are} homotopy 
equivalences; the case $m=3$ is the celebrated result of A. Hatcher 
in \cite{H}.  This means that only the following question is open:

\vspace{2mm}

\noindent\textbf{Four-dimensional Smale Conjecture. }
{\sl The natural inclusion of $\OO{5}$ in $\Diff(S^4)$ is a homotopy
equivalence.}

\vspace{2mm}

This question provides additional motivation for asking whether the 
actions in Theorem 2 are in fact equivariantly smoothable ({\it cf.} Problem 1).  
One possible approach to study the smoothability question could involve 
the use of gauge theory, possibly combined with some other techniques; 
we hope to return to such problems in a future paper.

It would also be interesting to know if the group actions constructed
in this paper can be used to provide a different proof that
$\OO{5}\not\simeq Top(S^4)$ in the spirit of \cite{Th}.

The homotopy equivalence $\OO{4}\subset Top(S^3)$ makes the following problem quite interesting.

\vspace{2mm}

\noindent{\bf Problem 2.}
{\it If a finite group acts effectively and topologically on $S^3$, must it be isomorphic to a subgroup 
of $\OO{4}$?
}

\vspace{2mm}

Finally, in view of Theorem 1, the following problem becomes of considerable interest.

\vspace{2mm}

\noindent{\bf Problem 3.}
{\it What is the smallest dimension $n$ of the sphere $S^n$ such that $S^n$ admits an
orientation preserving smooth  (locally linear) action of a finite group $G$ with $G$ not
being isomorphic to a subgroup of $\SO{n+1}$?
}

\vspace{3mm}

The paper is divided into two parts. The first contains the proof of Theorem 1. The second
contains the proof of Theorem 2.
Our proofs combine quite intricate considerations involving the theory of finite groups and
surgery theoretic considerations based on some computational results for Wall's surgery
obstruction groups \cite{W1}, \cite{CS2}.

\section{Orientation preserving actions on $S^4$}

\noindent{\it Proof of Theorem 1}. We start our proof by recalling some arguments from
\cite{MZ0, MZ}. 

Theorem 1 was verified by Mecchia and Zimmermann ({\it cf.} \cite{MZ0, MZ}) in many special 
cases; {\it e.g.}, when $G$ is nonsolvable. In fact, they showed that $G$ must be isomorphic to 
a subgroup of $\SO{5}$, except for the following indeterminacy of index two in the case where 
$G$ is solvable. 

{\it There is an index two subgroup $H$ of $G$ which is isomorphic to a subgroup of $\SO{4}$ 
such that $\Sigma^H=\{p_{+},p_{-}\}$, and $\Sigma^G=\emptyset$. 
}

We will prove Theorem 1 by removing the above indeterminacy of $G$, showing that
$G$ is isomorphic to a subgroup of $\OO{4}$. For reader's convenience, we shall give an 
outline of
Mecchia and Zimmermann's arguments in \cite{MZ} first. Let $E$ be the maximal semisimple 
normal subgroup
of $G$ (recall that a semisimple group is a central product of quasisimple groups, where a quasisimple
group is a perfect central extension of a simple group, {\it cf.} \cite{Su}, Chapter 6.6.) Then the arguments
in \cite{MZ} are divided into two cases: (1) $E$ is nontrivial, (2) $E$ is trivial. 

In an earlier paper \cite{MZ0}, Mecchia and Zimmermann showed that if $E$ is nontrivial, then $E$ 
must be isomorphic to one of the following groups: 
$A_5,A_6, A_5^\ast, A_5^\ast\times_{\ZZ_2} A_5^\ast$,
where $A_5^\ast$ denotes the binary dodecahedral group. With this understood, the arguments for
case (1) proceed by considering the subgroup $\tilde{E}$ which is generated by $E$ and its 
centralizer in $G$, exploiting the fact that $G/\tilde{E}$ is isomorphic to a subgroup of the outer
automorphism group $\text{Out }(E)$ which is small for the possible groups $E$. 

The index two indeterminacy occurs in case (2) where $E$ is trivial. In this case, one considers instead
the Fitting subgroup $F$ of $G$ (recall that the Fitting subgroup is the maximal nilpotent normal subgroup).
The subgroup $F^\ast$ generated by $F$ and $E$ is called the generalized Fitting subgroup of $G$, and
as its main property, $G/F^\ast$ is isomorphic to $\text{Out }(F^\ast)$. Since $E$ is trivial, one has in this
case the crucial fact that $G/F$ is isomorphic to a subgroup of $\text{Out }(F)$. The Fitting subgroup
$F$ is a direct product of its Sylow p-subgroups, for different primes $p$, and each of the Sylow p-subgroups of $F$ is normal in $G$. With this understood, the aforementioned index two indeterminacy in \cite{MZ} occurs precisely in the following situation: 

Let $S$ be any Sylow p-subgroup of $F$, and let $Z$ be the maximal elementary Abelian subgroup
in the center of $S$ (note that $Z$ is normal in $G$). Then one of the following is true ({\it cf.} \cite{MZ},
pp. 745-746).

\begin{itemize}
\item [{(i)}] $Z$ has rank one and $\Sigma^Z=\{p_{+},p_{-}\}$.
\item [{(ii)}] $Z$ has rank two and $\Sigma^Z=\{p_{+},p_{-}\}$.
\item [{(iii)}] $Z$ has rank three and $\Sigma^Z=\{p_{+},p_{-}\}$, with $p=2$. 
\end{itemize}

With the preceding understood, our arguments for removing the index two indeterminacy in \cite{MZ}
are based on the following Atiyah-Bott Theorem ({\it cf.} \cite{W1}).

\vspace{2mm}

{\bf Atiyah-Bott Theorem}: {\it Let a finite group $\Gamma$ act locally linearly, semifreely, and orientation 
preservingly on $\Sigma$ with two isolated fixed points $\{p_{+},p_{-}\}$. Then there is an orientation
reversing isomorphism between $T_{p_{+}}\Sigma$ and $T_{p_{-}}\Sigma$, under which the 
corresponding representations of $\Gamma$ become equivalent.}

\vspace{2mm}

The Atiyah-Bott Theorem is a crucial ingredient for proving that in the case of {\sf orientation--preserving} actions of $G$ on $\Sigma$, the aforementioned index two subgroup $H$ of $G$ has certain special properties. Namely, we have the following key lemma.

\begin{lemma}\label{lem:1}
Let $h\in H$ be any element such that $h^2\neq 1$ and for any $k\in\ZZ$ with $h^k\neq 1$, 
$\Sigma^{h^k}=\{p_{+},p_{-}\}$. 
Then for any $\lambda\in G\setminus H$, $\lambda$ does not lie in the set
$$C_G^\ast(h)=\{x\in G| xhx^{-1}=h \mbox{ or } h^{-1}\}.$$
\end{lemma}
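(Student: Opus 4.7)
My plan is to derive a contradiction via the Atiyah--Bott theorem.

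First, any $\lambda\in G\setminus H$ must exchange $p_+$ and $p_-$. Indeed, $H$ has index two in $G$ and is therefore normal, so $\lambda$ permutes $\Sigma^H=\{p_+,p_-\}$; and $\lambda$ cannot fix either point, for otherwise $G=\langle H,\lambda\rangle$ would fix that point, contradicting $\Sigma^G=\emptyset$. Since $\lambda$ preserves the orientation of $\Sigma$, its differential $d\lambda\colon T_{p_+}\Sigma\to T_{p_-}\Sigma$ is orientation-preserving when each tangent space is oriented by the ambient orientation of $\Sigma$.

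The hypothesis on $h$ says precisely that $\Gamma=\langle h\rangle$ acts semifreely and orientation-preservingly on $\Sigma$ with $\Sigma^\Gamma=\{p_+,p_-\}$, so the Atiyah--Bott theorem supplies an orientation-\emph{reversing} linear isomorphism $\psi\colon T_{p_+}\Sigma\to T_{p_-}\Sigma$ intertwining the tangential $\Gamma$-representations $\rho_\pm$. Assume for contradiction that $\lambda h\lambda^{-1}=h^\epsilon$ with $\epsilon\in\{+1,-1\}$. Differentiating $\lambda\circ h=h^\epsilon\circ\lambda$ at $p_+$ gives $d\lambda\circ\rho_+(h)=\rho_-(h)^\epsilon\circ d\lambda$, so the composition $B:=d\lambda\circ\psi^{-1}\in GL(T_{p_-}\Sigma)$ satisfies $B\,\rho_-(h)\,B^{-1}=\rho_-(h)^\epsilon$. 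Since $d\lambda$ preserves orientation and $\psi^{-1}$ reverses it, $B$ is orientation-reversing.

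To close the argument I would show---this is the only real point of difficulty---that every such $B$ is necessarily orientation-\emph{preserving}, which will contradict the previous paragraph. Because every non-identity power $h^k$ has $p_-$ as an isolated fixed point, $\rho_-$ is a faithful representation of $\langle h\rangle$ on $T_{p_-}\Sigma\cong\RR^4$ in which no non-identity element has a fixed vector; hence $T_{p_-}\Sigma=V_{b_1}\oplus V_{b_2}$ decomposes as a sum of two nontrivial $2$-dimensional real irreducibles, each with endomorphism ring $\CC$ by Schur's lemma. Thus $\mathrm{End}_{\langle h\rangle}(T_{p_-}\Sigma)$ is either $\CC\oplus\CC$ (if $[b_1]\neq[b_2]$) or $M_2(\CC)$ (if $[b_1]=[b_2]$), and its units---i.e.\ the centralizer $Z$ of $\rho_-(h)$ in $GL(T_{p_-}\Sigma)$---act $\CC$-linearly on $T_{p_-}\Sigma$ and hence have positive real determinant. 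This immediately settles $\epsilon=+1$. For $\epsilon=-1$ the set of conjugators is a coset $A_0Z$ of $Z$, and an explicit representative $A_0$---complex conjugation on each $V_{b_i}$, or the coordinate swap when $b_2\equiv-b_1\pmod{|h|}$---has determinant $+1$; so the whole coset lies in $GL^+(T_{p_-}\Sigma)$. In either case $B$ cannot exist, so $\lambda h\lambda^{-1}\notin\{h,h^{-1}\}$, proving $\lambda\notin C_G^\ast(h)$.
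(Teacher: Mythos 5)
Your proof is correct, and it rests on the same key input as the paper's argument: the Atiyah--Bott theorem, combined with the observation that any linear map intertwining the free tangential $\langle h\rangle$-representation with itself or its inverse must have positive determinant, contradicting the orientation reversal forced by composing the local behavior of $\lambda$ with the Atiyah--Bott identification. The differences are in execution, and they are genuine. First, you treat the subcases $\lambda h\lambda^{-1}=h$ and $\lambda h\lambda^{-1}=h^{-1}$ uniformly; the paper instead disposes of the centralizer case by a separate Smith-theoretic argument ($\Sigma^\lambda$ is a $0$- or $2$-sphere, invariant under $h$ yet disjoint from $\{p_+,p_-\}$, which is impossible) and reserves Atiyah--Bott for the inverting case. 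Second, your determinant computation is structural: since $h^2\neq 1$ and every nontrivial power of $h$ fixes only $p_\pm$, the tangential representation is a sum of two faithful rotation planes, its commutant is $\CC\oplus\CC$ or $M_2(\CC)$ and hence consists of orientation-preserving maps, and the $h\mapsto h^{-1}$ conjugators form a single coset of this commutant containing the explicit positive-determinant representative given by complex conjugation on each summand. The paper instead writes the conjugator in $2\times 2$ blocks, shows each block $A_{ij}$ satisfying $H_\theta^{-1}A_{ij}=A_{ij}H_\theta$ has nonpositive determinant, and (in the equivalent-summand case) invokes the Schur-complement identity $\det A=\det A_{11}\det(A_{22}-A_{21}A_{11}^{-1}A_{12})$ after normalizing so that $A_{11}$ is invertible; your route avoids both that normalization and the case split between equivalent and inequivalent summands. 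One caveat applies equally to both arguments: treating the germ of $\lambda$ at $p_\pm$ as a linear intertwiner is immediate only in the smooth case and requires the standard justification in the locally linear one, so this is not a gap specific to your write-up.
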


\begin{proof}
Assume $\lambda\in C_G^\ast(h)$. We shall derive a contradiction.

Consider first the case where $\lambda\in C_G(h)$, the centralizer of $h$. In this case, 
the fixed-point set 
$\Sigma^\lambda$, which is either a $0$-sphere or $2$-sphere, is invariant under $h$. 
Furthermore, it is disjoint from $\Sigma^h=\{p_{+},p_{-}\}$ because $\lambda$ switches the 
two points. But this is a contradiction, for either $h^2\neq 1$ fixes $\Sigma^\lambda$ or $h$ 
has a fixed point on it.

Consider next the case where $\lambda\in C_G^\ast(h)\setminus C_G(h)$. By the Atiyah-Bott
Theorem, there is an orientation reversing isomorphism between $T_{p_{+}}\Sigma$ 
and $T_{p_{-}}\Sigma$, under which the corresponding action of $h$ becomes equivalent.
We will denote this real $4$-dimensional $\langle h\rangle$-representation by $V$. Note that 
$\lambda:V\rightarrow V$ is {\sf orientation reversing}, and for any $v\in V$, 
$h^{-1}\lambda(v)=\lambda h(v)$. We will show these two conditions contradict each other.

First, a digression. Let $A$ be a $2\times 2$ matrix of real coefficients and let 
$$
H_\theta=\left (\begin{array}{ll}
\cos \theta & -\sin\theta\\
\sin \theta & \cos\theta\\
\end{array}
\right )
$$
where $\sin\theta\neq 0$, such that $H_\theta^{-1} A=AH_\theta$. Then it's an elementary 
exercise to see that there are $a,a^\prime\in \RR$, such that 
$$
A=\left (\begin{array}{ll}
a & a^\prime\\
a^\prime & -a\\
\end{array}
\right )
$$
In particular, $\det A=-a^2-(a^\prime)^2\leq 0$ for any such a matrix $A$, and $\det A=0$
 if and only if $A=0$.

Now suppose $V=V_1\oplus V_2$ where $V_1,V_2$ are 2-dimensional, non-equivalent 
$\langle h\rangle$-representations. Then $\lambda(V_i)=V_i$, $i=1,2$, because 
$\lambda(V_i)$ is equivalent to $V_i$ for each $i$, as $\lambda\in C_G^\ast(h)$ by assumption. 
Then $h^{-1}\lambda(v)=\lambda h(v)$ implies that the restriction of $\lambda$ on each $V_i$
may be represented by a $2\times 2$ matrix $A_i$ satisfying 
$H_{\theta_i}^{-1} A_i=A_iH_{\theta_i}$ for some $\theta_i$. Note that $\Sigma^{h^2}=\{p_{+},
p_{-}\}$ implies that $\sin \theta_i\neq 0$ for each $i$, and consequently, $\det A_i\leq 0$.
But this implies that $\lambda: V\rightarrow V$ is orientation preserving, which is a contradiction.

Suppose $V=V_1\oplus V_2$ where $V_1,V_2$ are 2-dimensional $\langle h\rangle$-representations which are equivalent. Without loss of generality, we assume, after giving proper orientation to each $V_i$, that the action of $h$ on each $V_i$ is given by a rotation by an 
angle $\theta$, where $0<\theta<\pi$ (in particular, $\sin\theta\neq 0$). If we represent 
$\lambda$ by a matrix $A=(A_{ij})$, where each $A_{ij}$ is a $2\times 2$ matrix, then
$h^{-1}\lambda(v)=\lambda h(v)$ implies that $H_\theta^{-1} A_{ij}=A_{ij}H_\theta$ for
each $i,j$. Without loss of generality, assume $A_{11}$ is non-singular. Then 
$$
\det A=\det A_{11}\cdot \det(A_{22}-A_{21}A^{-1}_{11}A_{12})>0,
$$
because $A_{22}-A_{21}A^{-1}_{11}A_{12}$ satisfies the equation 
$H_\theta^{-1} A=AH_\theta$ also, and hence has non-positive determinant. But this
contradicts the fact that $\lambda:V\rightarrow V$ is orientation reversing. Hence
Lemma \ref{lem:1} follows.

\end{proof}

Given the above, we shall divide the proof of Theorem 1 into two stages. In Stage 1, 
we assume that there is a prime $p$ such that either (ii) or (iii) is true. 

\begin{prop}\label{prop:1}
Suppose there is a prime $p$ such that either (ii) or (iii) is true. Then $G$ is isomorphic 
to a subgroup of $\SO{4}$.
\end{prop}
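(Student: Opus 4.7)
My plan is to argue by contradiction, assuming the index-two indeterminacy survives, i.e.\ there is some $\lambda \in G\setminus H$ (which must interchange $p_+$ and $p_-$), and using the rank hypothesis on $Z$ to derive a contradiction via Lemma \ref{lem:1}. Because $Z$ is normal in $G$, conjugation by $\lambda$ induces an automorphism $c_\lambda \in \mathrm{Aut}(Z)\cong GL(r,\mathbb{F}_p)$ where $r=\mathrm{rank}(Z)$. The goal is to show that no such $c_\lambda$ is compatible with both the local $Z$-representation at $p_+$ and the constraints coming from Lemma \ref{lem:1}.

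The essential preliminary step is a local analysis of $Z$ at $p_+$. Local linearity together with the connectedness of $\Sigma$ forces the action of $Z$ on $T_{p_+}\Sigma\cong\mathbb{R}^4$ to be faithful; combined with $\Sigma^Z=\{p_+,p_-\}$ and orientation preservation, this gives a faithful, orientation preserving representation of $Z$ with trivial fixed subspace. Decomposing $T_{p_+}\Sigma$ into $Z$-isotypic summands yields characters $\chi_i\in\hat Z$ that jointly separate points, and an element $h\in Z$ satisfies $\Sigma^h=\{p_+,p_-\}$ exactly when $\chi_i(h)\neq 1$ for every $i$. Since $\lambda$ preserves the pair $\{p_+,p_-\}$, the automorphism $c_\lambda$ preserves this ``good'' set setwise, and hence also permutes the ``exceptional'' union $\bigcup_i \ker\chi_i$.

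In case (ii), where $p$ is odd and $Z\cong(\mathbb{Z}/p)^2$, exactly two characters $\chi_1,\chi_2$ appear, each in a $2$-dimensional real irreducible summand, and they are $\mathbb{F}_p$-linearly independent by faithfulness. Every nontrivial $h\in Z$ has order $p\geq 3$, so $h^2\neq 1$, and whenever $h\notin \ker\chi_1\cup\ker\chi_2$ the hypotheses of Lemma \ref{lem:1} are met, forcing $c_\lambda(h)\notin\{h,h^{-1}\}$. Combining this with the observation that $c_\lambda$ either preserves $\ker\chi_1,\ker\chi_2$ individually or interchanges them narrows $c_\lambda$ to a short list of diagonal/antidiagonal candidates in $GL(2,\mathbb{F}_p)$; I would eliminate each candidate by using $c_{\lambda^2}=c_\lambda^2$ together with $\lambda^2\in H$, and the explicit description of $H\hookrightarrow\SO{4}$ from the Mecchia--Zimmermann analysis, which pins down $\chi_1,\chi_2$ tightly enough that no admissible $c_\lambda$ remains.

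The main obstacle is case (iii), where $p=2$ and $Z\cong(\mathbb{Z}/2)^3$: now every nontrivial $h\in Z$ has $h^2=1$, so Lemma \ref{lem:1} cannot be applied to any $h\in Z$. A first structural observation is that the local $Z$-representation on $\mathbb{R}^4$ must decompose as four distinct real $1$-dimensional characters $\chi_1,\chi_2,\chi_3,\chi_4$ with $\chi_1\chi_2\chi_3\chi_4=1$ in $\hat Z$ (any configuration having a character of multiplicity $\geq 2$ fails either faithfulness or orientability), and consequently the unique nontrivial $h_0\in Z$ with $\Sigma^{h_0}=\{p_+,p_-\}$ is the involution acting as $-I$ on $T_{p_+}\Sigma$. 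To still invoke Lemma \ref{lem:1}, I would look beyond $Z$: inside the Sylow $2$-subgroup $S$ of the Fitting subgroup $F$ (which contains $Z$ in its center), I would construct an element $h$ of order $\geq 4$ all of whose nontrivial powers have fixed set $\{p_+,p_-\}$, apply Lemma \ref{lem:1} to $\langle h\rangle$, and use normality considerations to force $\lambda\in C_G^*(h)$, yielding the contradiction. The hardest step is producing such an $h$, which amounts to a lifting/Schur--Zassenhaus type argument in the $2$-group $S$, using the rigid constraint that $H\subset\SO{4}$ places on the structure of $S$ together with the rank-$3$ hypothesis on $Z$.
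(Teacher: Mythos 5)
Your overall strategy does not match what the proposition requires, and the two case analyses each break down. First, the existence of some $\lambda\in G\setminus H$ is not a hypothesis to be contradicted: $H$ has index two by the Mecchia--Zimmermann setup, so $G\setminus H$ is automatically nonempty, and the content of the proposition is that the degree-two extension $G$ of $H$ still embeds abstractly in $\SO{4}$. The paper's route is the opposite of yours: it proves that \emph{every} $\lambda\in G\setminus H$ is an involution (Lemmas \ref{lem:2} and \ref{lem:3}, which handle (ii) and (iii) respectively), deduces from this purely group-theoretically that $H$ is abelian and inverted by $\lambda$, and then writes down an explicit embedding of the resulting generalized dihedral group into $\SO{4}$ via symmetric unitary matrices and complex conjugation (Observation \ref{obs:2}). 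Your proposal contains no substitute for this final constructive step; it could only succeed if the configurations in (ii) and (iii) were outright impossible, which you do not establish. Moreover, Lemma \ref{lem:1} is not the engine here at all --- the paper explicitly reserves it for Stage 2 (case (i)); Proposition \ref{prop:1} is proved with Smith theory, the Borel formula, and the geometry of intersecting fixed $2$-spheres.

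Concretely: in case (ii) you silently assume $p$ is odd, but (ii) allows $p=2$, in which event every element of $Z$ is an involution and Lemma \ref{lem:1} says nothing; even for odd $p$, your ``short list'' of candidates for $c_\lambda$ is not short (a diagonal automorphism such as $(a,b)\mapsto(a,2b)$ for $p\geq 5$ passes the Lemma \ref{lem:1} test on every element outside $\ker\chi_1\cup\ker\chi_2$), and the promised elimination is never carried out. In case (iii) the situation is worse: your plan hinges on producing an element of order at least $4$ in the Sylow $2$-subgroup $S$ of $F$ all of whose nontrivial powers fix exactly $\{p_+,p_-\}$, but no such element exists --- under hypothesis (iii) one shows $C_G(Z)$ is elementary abelian and hence $S=Z\cong(\ZZ/2)^3$ has exponent $2$ (this is proved inside Lemma \ref{lem:3}), so the ``hardest step'' you flag is actually impossible rather than merely hard. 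The paper instead analyzes the permutation action of $\lambda$ on the six involutions of $Z$ with $2$-dimensional fixed sets, ruling out orbit types one by one using the intersection pattern of the fixed $2$-spheres and the normal bundle of the resulting fixed $1$-sphere, and then excludes order $6$ using $F=S=Z$ and $G/F\hookrightarrow\mathrm{Out}(F)$. You would need to replace both of your case arguments and add the embedding step to obtain a proof.
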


The proof of Proposition 1.2 is based on the following observation.

\begin{obs}\label{obs:2}
Suppose $\lambda^2=1$ for any $\lambda\in G\setminus H$. Then $G$ is isomorphic 
to a subgroup of $\SO{4}$.
\end{obs}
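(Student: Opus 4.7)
The plan is to combine the group-theoretic content of the hypothesis with the fact that every finite abelian subgroup of $\SO{4}$ sits in a maximal torus, and then to write down an involution of $\SO{4}$ which inverts the torus and plays the role of $\lambda$.

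First, I would fix any $\lambda\in G\setminus H$; then for each $h\in H$ the product $\lambda h$ again lies in $G\setminus H$, so by hypothesis $(\lambda h)^2=1$. Combined with $\lambda^2=1$ this rewrites as $\lambda h\lambda^{-1}=h^{-1}$ for every $h\in H$. An automorphism that inverts every element of a group forces that group to be abelian, since from $(h_1h_2)^{-1}=\lambda(h_1h_2)\lambda^{-1}=h_1^{-1}h_2^{-1}$ one reads off $h_1h_2=h_2h_1$. So $H$ is abelian.

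Since $H$ is a finite abelian subgroup of the compact connected Lie group $\SO{4}$, it is contained in a maximal torus, which after conjugation we may take to be $T=\SO{2}\times\SO{2}\subset\SO{4}$. Now the element $\tau=\mathrm{diag}(1,-1,1,-1)$ lies in $\SO{4}$ (determinant $+1$), satisfies $\tau^2=1$, and acts on $T$ by inversion: in each $\SO{2}$ factor the reflection $\mathrm{diag}(1,-1)$ conjugates a rotation to its inverse.

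Finally, using that $H$ has index two, write $G=H\sqcup \lambda H$ and define $\phi:G\to\SO{4}$ by $\phi(h)=h$ for $h\in H$ and $\phi(\lambda h)=\tau h$. The relations $\lambda^2=\tau^2=1$ together with the fact that both $\lambda$ and $\tau$ act on $H$ by inversion make $\phi$ a homomorphism after a routine four-case check on products, and it is injective because $\tau\notin T$ forces $\tau h\neq 1$ for every $h\in T$. Hence $G$ embeds in $\SO{4}$. There is no deep obstacle in this plan: step one is elementary group theory, and the remaining steps are standard structural facts about $\SO{4}$ (maximal torus and Weyl-group inversion).
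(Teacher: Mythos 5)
Your opening step and your overall strategy are exactly the paper's: from $(\lambda h)^2=1=\lambda^2$ you get $\lambda h\lambda^{-1}=h^{-1}$, hence $H$ is abelian and $G$ is the generalized dihedral extension of $H$, which you then try to realize inside $\SO{4}$ by adjoining an involution of $\SO{4}$ that inverts $H$. The paper writes $\RR^4=\CC^2$, puts $H$ into symmetric unitary $2\times 2$ matrices, and lets $t$ act by $(z_1,z_2)\mapsto(\bar z_1,\bar z_2)$ --- which is precisely your $\tau=\mathrm{diag}(1,-1,1,-1)$ in disguise. Your four-case homomorphism check and the injectivity argument (a normal subgroup contained in $H\cup\tau H$ that misses $\tau H$ and meets $H$ trivially) are fine as far as they go.

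The gap is the sentence ``every finite abelian subgroup of $\SO{4}$ sits in a maximal torus.'' That is true for single elements and for connected abelian subgroups, but it is false for finite abelian subgroups of a compact connected group: the diagonal Klein four group in $\SO{3}$ is the classical counterexample, and in $\SO{4}$ the diagonal subgroup $(\ZZ_2)^3$ (all $\mathrm{diag}(\epsilon_1,\dots,\epsilon_4)$ with $\prod\epsilon_i=1$) is abelian of order $8$, whereas the $2$-torsion of any maximal torus $T\cong S^1\times S^1$ has order only $4$. This is not a hypothetical worry in the present context: in case (iii) the group $Z\cong(\ZZ_2)^3$ lies in $H$ and acts on $T_{p_+}\Sigma$ by exactly this diagonal representation (six involutions with $2$-dimensional fixed sets plus $-I$), so the non-toral case occurs precisely where the Observation is invoked. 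When $H$ is non-toral your $\tau$ cannot be constructed (and the conclusion itself needs care: $H=(\ZZ_2)^3$ with $\lambda$ centralizing it gives $G\cong(\ZZ_2)^4$, which embeds in $\SO{5}$ but not in $\SO{4}$). To be fair, the paper's own proof rests on the equivalent assertion that an abelian $H\subset\SO{4}$ can be written as symmetric unitary matrices on $\CC^2$, which for an abelian group is the same as lying in a maximal torus; so you have reproduced the paper's argument together with its weak point. A complete write-up would either prove that the non-toral case cannot arise under the standing hypotheses or handle it separately (e.g., by settling for an embedding into $\SO{5}$ there, which is all Theorem 1 requires).
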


\begin{proof}
We pick a $t\in G\setminus H$, and observe that $G\setminus H=\{ht|h\in H\}$. 
Then $\lambda^2=1$ for any $\lambda\in G\setminus H$ gives $t^2=1$, $htht=1$, 
which implies that
$$
h^{-1}=tht^{-1}, \;\; \forall h\in H.
$$
Furthermore, for any $h_1,h_2\in H$,
$$
h_1h_2=(th_1^{-1}t^{-1})\cdot (th_2^{-1}t^{-1})=t(h_2h_1)^{-1}t^{-1}=h_2h_1.
$$
Consequently, $H$ is Abelian, and $G$ has the following presentation
$$
G=\{h\in H,t| t^2=1, tht^{-1}=h^{-1}, \forall h\in H\}.
$$

To see that $G$ is isomorphic to a subgroup of $\SO{4}$, note that since $H\subset \SO{4}$ 
is Abelian, there is an identification $\RR^4=\CC^2$ such that $H$ can be represented by
complex $2\times 2$ matrices which are unitary and symmetric. 
With this understood, we extend the faithful representation of $H$ on $\CC^2$ to a faithful 
representation of $G$ on $\RR^4$ by defining the action of $t$ as follows:
$$
t\cdot (z_1,z_2)=(\bar{z}_1,\bar{z}_2), \;\; (z_1,z_2)\in\CC^2.
$$
This indeed is compatible with the action of $H$, as for any $h\in H$,
$$
tht^{-1}(z_1,z_2)=th(\bar{z}_1,\bar{z}_2)=\bar{h} (z_1,z_2)=h^{-1} (z_1,z_2).
$$
\end{proof}

Given the above observation, Proposition \ref{prop:1} follows from Lemma \ref{lem:2} 
and Lemma \ref{lem:3} below, 
which deal with (ii) and (iii) respectively. But first of all, the following fact, which follows 
from a simple argument in Smith theory, will be frequently used:

{\it Let $\lambda\in G\setminus H$ such that $\lambda^2\neq 1$. Then $\Sigma^\lambda$ is a
$0$-sphere disjoint from $\{p_{+},p_{-}\}$ and $\Sigma^{\lambda^2}$ is a $2$-sphere 
containing $\{p_{+},p_{-}\}$.
}

\begin{lemma}\label{lem:2}
Suppose there is a Sylow p-subgroup $S$ of the Fitting subgroup $F$ such that the maximal
elementary Abelian subgroup $Z$ of $S$ has rank two with $\Sigma^Z=\{p_{+},p_{-}\}$, then
$\lambda^2=1$ for any $\lambda\in G\setminus H$.
\end{lemma}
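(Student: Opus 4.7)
The plan is to apply Lemma \ref{lem:1} to the ``regular'' elements of $Z$ and then upgrade the resulting constraint on the conjugation action of $\lambda$ on $Z$ to the conclusion $\lambda^2=1$.

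First I would decompose the tangent representation $V=T_{p_+}\Sigma$ as a $Z$-module. Since $\Sigma^Z=\{p_+,p_-\}$, we have $V^Z=0$. Treating $p$ odd first, $V=\rho_1\oplus\rho_2$, where each $\rho_i$ is a $2$-dimensional real irreducible associated to a nontrivial character $\chi_i\colon Z\to\CC^{\ast}$. Effectiveness of the $Z$-action, via local linearity, forces $\chi_1,\chi_2$ to be $\mathbb{F}_p$-linearly independent characters; otherwise some nontrivial $z\in Z$ would act trivially on $V$ and hence on $\Sigma$. Setting $K_i=\ker(\chi_i)$ gives two distinct lines in $Z$, and the ``regular'' elements $z\in Z\setminus(K_1\cup K_2)$ are precisely those with $\Sigma^z=\{p_+,p_-\}$. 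Since $\langle z\rangle\cap(K_1\cup K_2)=\{1\}$ for regular $z$, every nontrivial power of a regular element is regular.

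Next, for $p$ odd each regular $z$ satisfies the hypothesis of Lemma \ref{lem:1}: $z^2\ne 1$ (as $p$ is odd) and $\Sigma^{z^k}=\{p_+,p_-\}$ for every $k$ with $z^k\ne 1$. Hence for every $\lambda\in G\setminus H$, $\lambda\notin C_G^{\ast}(z)$. Writing $\phi=\operatorname{Ad}(\lambda)|_Z\in GL_2(\mathbb{F}_p)$, this says $\phi(z)\ne\pm z$ for every regular $z$, so the $\pm 1$-eigenspaces of $\phi$ lie inside $K_1\cup K_2$. Being $\mathbb{F}_p$-subspaces of $Z$, they must equal $\{0\}$, $K_1$, or $K_2$; in particular $\phi\ne\pm I$.

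The main obstacle is to upgrade this constraint to $\lambda^2=1$. Assume for contradiction some $\lambda\in G\setminus H$ has $\lambda^2\ne 1$. By the Fact preceding the lemma, $\Sigma^{\lambda^2}$ is a $2$-sphere containing $\{p_+,p_-\}$ and $\lambda$ swaps $p_+\leftrightarrow p_-$. My plan proceeds in two stages. First, show that $\phi^2=I$, so that $\lambda^2\in C_G(Z)$: here I expect to apply Lemma \ref{lem:1} across the entire coset $\lambda H\subseteq G\setminus H$ and combine with the constraint that $H$ sits in $\SO{4}$, forcing the image of $G\setminus H$ in $GL_2(\mathbb{F}_p)$ to consist of elements whose squares act trivially on $Z$. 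Second, given $\phi^2=I$, $Z$ preserves the $2$-sphere $\Sigma^{\lambda^2}$; since $(\ZZ/p)^2$ cannot act faithfully on $S^2$ with a fixed point, one of the $K_i$ acts trivially on $\Sigma^{\lambda^2}$, which identifies $T_{p_+}\Sigma^{\lambda^2}$ with $\rho_i$ and forces $\lambda^2$ to act on $V$ trivially on $\rho_i$ and nontrivially on $\rho_{3-i}$. Combining this with the Atiyah--Bott theorem applied to a regular $z$ and the intertwining $d\lambda\colon T_{p_+}\Sigma\to T_{p_-}\Sigma$ (which, by the analysis of step two, respects the $\rho_i$-decomposition), I expect to extract a compatibility condition forcing the rotation of $\lambda^2$ on $\rho_{3-i}$ to be trivial, hence $\lambda^2=1$ by local linearity --- contradicting the assumption. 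For $p=2$, elements of $Z$ have order $2$ and Lemma \ref{lem:1} does not apply directly to $Z$; in that case one replaces $Z$ by a cyclic subgroup of $S$ of order $\geq 4$ lying in the centralizer of $Z$ and repeats the analysis.
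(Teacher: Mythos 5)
Your setup is fine as far as it goes: the two kernels $K_1,K_2$ are exactly the two index-$p$ subgroups with $2$-sphere fixed sets (the paper gets them from the Borel formula via \cite{MZ0}), the regular elements are exactly those with fixed set $\{p_+,p_-\}$, and for odd $p$ they do satisfy the hypotheses of Lemma \ref{lem:1}, so $\phi=\mathrm{Ad}(\lambda)|_Z$ has no $\pm1$-eigenvector outside $K_1\cup K_2$. But the two steps that would actually finish the argument are only announced, and as described neither goes through. For Stage 1, the constraints you have actually derived do not imply $\phi^2=I$: since $\lambda$ permutes $\{K_1,K_2\}$, either $\phi$ preserves each $K_i$, in which case it is diagonalizable with eigenvalues $a,b$ and your condition only rules out $a=b=\pm1$ (so $a=1$, $b=2$ over $\mathbb{F}_5$, say, is perfectly consistent with everything you proved, and $\phi^2\neq I$), or $\phi$ swaps the $K_i$, in which case $\phi^2$ is the scalar $\alpha\beta$ and your condition only forbids $\alpha\beta=1$. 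No mechanism is given for how the abstract embedding $H\subset\SO{4}$ would constrain the conjugation action of the coset $G\setminus H$ on $Z$, and Stage 2 ends with ``I expect to extract a compatibility condition,'' which is not an argument. Finally, the $p=2$ fallback fails outright: $S$ may simply be elementary abelian of rank two (so $S=Z$ and there is no cyclic subgroup of order $\geq 4$ to substitute), and for $p=2$ no element of $Z$ satisfies the hypothesis $h^2\neq1$ of Lemma \ref{lem:1}.

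The missing idea is to work with the two $2$-spheres $\Sigma^{K_1},\Sigma^{K_2}$ directly rather than with eigenvalue constraints on $\phi$, and the paper's proof uses neither Lemma \ref{lem:1} nor Atiyah--Bott here. Since $\lambda$ permutes $\{K_1,K_2\}$, the element $\lambda^2$ normalizes each $K_i$ and hence preserves each $\Sigma^{K_i}$ --- this weak statement replaces your $\phi^2=I$ and is all that is needed. The three $2$-spheres $\Sigma^{K_1}$, $\Sigma^{K_2}$, $\Sigma^{\lambda^2}$ all contain $\{p_+,p_-\}$; because $\lambda^2$ preserves the decomposition $T_{p_+}\Sigma=\rho_1\oplus\rho_2$ and fixes the tangent plane of $\Sigma^{\lambda^2}$ pointwise, one concludes $\Sigma^{\lambda^2}=\Sigma^{K_1}$, say. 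Then $\Sigma^{\lambda}\subset\Sigma^{\lambda^2}=\Sigma^{K_1}$, so $\lambda$ itself preserves $\Sigma^{K_1}$, hence also $\Sigma^{K_2}$, and it acts on $\Sigma^{K_2}$ without fixed points (its fixed points lie in $\Sigma^{K_1}\cap\Sigma^{K_2}=\{p_+,p_-\}$, which $\lambda$ swaps). A finite cyclic group acting freely on a $2$-sphere has order at most $2$, so $\lambda^2$ fixes $\Sigma^{K_2}$ pointwise, contradicting $\Sigma^{\lambda^2}=\Sigma^{K_1}$. This argument is uniform in $p$, including $p=2$.
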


\begin{proof}
First, as shown in \cite{MZ0}, Lemma 4.1, 
there are exactly two index $p$ subgroups $Z_1,Z_2$ of $Z$ 
whose fixed-point set is a $2$-sphere, and for all other index $p$ subgroups of $Z$ the fixed-point
set is a $0$-sphere. For convenience we sketch its proof here. The action of $Z$ on $\Sigma$
obeys the following Borel Formula ({\it cf.} \cite{bredon})
$$
4-r=\Sigma_{i} (n(Z_i)-r)
$$
where $Z_i$ is running over all index $p$ subgroups of $Z$, $n(Z_i)$ denotes the dimension of
$\Sigma^{Z_i}$, and $r$ is the dimension of $\Sigma^Z$. Since each $Z_i$ is cyclic, $n(Z_i)$
is either $0$ or $2$ by Smith theory. The claim follows easily from Borel Formula with $r=0$. 

Let $\lambda\in G\setminus H$ and assume $\lambda^2\neq 1$. The action of $\lambda$ on
$Z$ by conjugation will leave the set $\{Z_1,Z_2\}$ invariant; in particular, $\lambda^2$ leaves
each of $Z_1$ and $Z_2$ invariant under conjugation. Consequently, $\Sigma^{Z_i}$ ($i=1,2$)
is invariant under $\lambda^2$. Now observe that the three $2$-spheres $\Sigma^{Z_1}$,
$\Sigma^{Z_2}$ and $\Sigma^{\lambda^2}$ all contain $\{p_{+},p_{-}\}$. The tangent plane
of $\Sigma^{\lambda^2}$ at $p_{+}$ is fixed under $\lambda^2$, and the tangent planes
of $\Sigma^{Z_1}$ and $\Sigma^{Z_2}$ at $p_{+}$ are distinct. It follows that $\Sigma^{\lambda^2}$
must coincide with one of $\Sigma^{Z_1}$, $\Sigma^{Z_2}$, say $\Sigma^{Z_1}$.
This implies that $\Sigma^\lambda\subset \Sigma^{\lambda^2}=\Sigma^{Z_1}$, and $\Sigma^{Z_1}$
is invariant under $\lambda$. It follows then that each of $Z_1$ and $Z_2$ is invariant under
the action of $\lambda$. Particularly, $\Sigma^{Z_2}$ is invariant under $\lambda$. Now observe
that $\Sigma^{Z_1}\cap \Sigma^{Z_2}=\Sigma^Z=\{p_{+},p_{-}\}$, so that the action of
$\lambda$ on $\Sigma^{Z_2}$ has no fixed points. This implies that $\lambda^2$ fixes 
$\Sigma^{Z_2}$ also, which is a contradiction. Hence $\lambda^2=1$.
\end{proof}

\begin{lemma}\label{lem:3}
Suppose the maximal elementary Abelian subgroup $Z$ of the Sylow 2-subgroup $S$ of $F$ 
has rank three with $\Sigma^Z=\{p_{+},p_{-}\}$. Then $\lambda^2=1$ for any $\lambda\in 
G\setminus H$. 
\end{lemma}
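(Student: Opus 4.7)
The plan is to argue by contradiction, adapting the strategy of Lemma~\ref{lem:2} to the richer rank-three geometry. Suppose some $\lambda\in G\setminus H$ has $\lambda^2\neq 1$; by the Smith-theory fact recalled above, $\Sigma^{\lambda^2}$ is then a $2$-sphere through $\{p_+,p_-\}$ and $\Sigma^\lambda$ is a $0$-sphere disjoint from $\{p_+,p_-\}$.

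I first pin down the tangent $Z$-representation $V=T_{p_+}\Sigma$. Every real irreducible representation of $Z\cong(\ZZ_2)^3$ is one-dimensional, so $V=\bigoplus_\chi V_\chi^{m_\chi}$. The conditions $V^Z=0$ (from $\Sigma^Z=\{p_+,p_-\}$), orientation preservation ($\sum m_\chi\chi=0$ in $Z^*$), and effectiveness (the characters with $m_\chi>0$ must span $Z^*$) eliminate every partition of $4$ except
\[
V \;=\; V_{\chi_1}\oplus V_{\chi_2}\oplus V_{\chi_3}\oplus V_{\chi_4},
\]
with $\chi_1,\chi_2,\chi_3$ a basis of $Z^*$ and $\chi_4=\chi_1\chi_2\chi_3$. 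In particular $Z$ embeds in $\SO{4}$ as the diagonal rank-three elementary abelian subgroup, whose centralizer there is $Z$ itself. Let $z_0\in Z$ be the distinguished element with $\chi_i(z_0)=-1$ for all $i$; it acts as $-I$ on $V$ and satisfies $\Sigma^{z_0}=\{p_+,p_-\}$. Since $\{\chi_1,\dots,\chi_4\}=\{\chi\in Z^*:\chi(z_0)=-1\}$ depends only on $z_0$, Atiyah--Bott applied to the semifree $\langle z_0\rangle$-action (together with a comparison of $Z$-characters between $T_{p_+}\Sigma$ and $T_{p_-}\Sigma$) forces $\lambda$ to preserve this coset, i.e.\ $\lambda z_0\lambda^{-1}=z_0$.

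Next I want to identify $\Sigma^{\lambda^2}$ with $\Sigma^h$ for some $h\in Z\setminus\{1,z_0\}$, equivalently to show that the tangent plane $T=T_{p_+}\Sigma^{\lambda^2}$ is one of the six coordinate $2$-planes $V_{\chi_i}\oplus V_{\chi_j}$. If $\lambda^2$ centralizes $Z$ this is immediate: then $d\lambda^2\in C_{\SO{4}}(Z)=Z$, and faithfulness of the tangent $H$-representation forces $\lambda^2\in Z$ directly. In general $\mathrm{conj}_\lambda$ sits in $\mathrm{Aut}(Z)_{z_0}\cong S_4$ and permutes the four invariant circles $C_i=\Sigma^{\ker\chi_i}$; I then use the Ker\'ekj\'art\'o rigidity of finite-order self-homeomorphisms of $S^1$ and $S^2$, applied to the $\lambda^2$-invariant $C_i$'s and to $\Sigma^{\lambda^2}$, to force $T$ into coordinate form. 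This case analysis on the $S_4$-combinatorics is what I expect to be the main obstacle.

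Once $\Sigma^{\lambda^2}=\Sigma^h$ is established, the argument closes in parallel with Lemma~\ref{lem:2}. Set $h'=hz_0$; then $\Sigma^{h'}$ is tangent at $p_+$ to the coordinate $2$-plane complementary to $T$, so $\Sigma^h\cap\Sigma^{h'}=\{p_+,p_-\}$. The identity $\lambda\Sigma^{\lambda^2}=\Sigma^{\lambda^2}$ forces $\mathrm{conj}_\lambda(h)=h$, and since $\lambda$ centralizes $z_0$ it also centralizes $h'$; thus $\lambda$ acts on $\Sigma^{h'}\cong S^2$. The fixed set $\Sigma^\lambda$ lies in $\Sigma^{\lambda^2}=\Sigma^h$ and is disjoint from $\{p_+,p_-\}$, hence from $\Sigma^{h'}$, while $\lambda$ swaps $p_\pm$, so $\lambda$ has no fixed points on $\Sigma^{h'}$. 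By Ker\'ekj\'art\'o's theorem every periodic self-homeomorphism of $S^2$ is topologically orthogonal, so a fixed-point-free one must be the antipodal involution; hence $\lambda^2$ acts trivially on $\Sigma^{h'}$, giving $\Sigma^{h'}\subset\Sigma^{\lambda^2}=\Sigma^h$, which contradicts $\Sigma^h\cap\Sigma^{h'}=\{p_+,p_-\}$.
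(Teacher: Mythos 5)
There is a genuine gap, and it sits exactly where you flag it yourself. Your outline reduces everything to showing that $T_{p_+}\Sigma^{\lambda^2}$ is one of the six coordinate $2$-planes $V_{\chi_i}\oplus V_{\chi_j}$, and you then write that the $S_4$-case analysis needed to establish this ``is what I expect to be the main obstacle.'' That case analysis \emph{is} the proof: the paper spends essentially the entire argument analyzing the orbit structure of $\mathrm{conj}_\lambda$ on the six involutions $Z_0=Z\setminus\{1,h\}$ (ruling out orbits of length $6$, $5$, $4$, $2$ and the trivial action by rank and invariance arguments in the spirit of Lemma~\ref{lem:2}). Moreover, the reduction you propose is not actually attainable in all cases: when $\lambda$ has an orbit of length $3$ on $Z_0$, the three corresponding $2$-spheres are permuted, their common intersection is a $\lambda$-invariant circle $\Sigma^{Z^\prime}$, and the local geometry only yields $\lambda^6=1$ --- it does \emph{not} force $\Sigma^{\lambda^2}$ to coincide with any $\Sigma^x$, $x\in Z_0$. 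Excluding order $6$ then requires input of a completely different, global nature: one shows $C_G(Z)$ is elementary abelian, hence $F=S=Z$, and then uses $G/F\hookrightarrow \mathrm{Out}(F)$ to conclude $\lambda^3\in F\subset H$ and derive the contradiction. Your sketch has no counterpart to this step and would get stuck precisely in the orbit-of-three configuration.

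A secondary point: your closing deduction rests on the assertion that a fixed-point-free periodic self-homeomorphism of $S^2$ must be the antipodal involution. As literally stated this is false: $-R_{2\pi/3}\in \OO{3}$ has order $6$, acts on $S^2$ without fixed points, and is not an involution (its square is a nontrivial rotation). So even granting the coordinate-plane identification, the final contradiction as you justify it does not go through without additional information pinning down the order of $\lambda$ on $\Sigma^{h^\prime}$. The correct setup (as in Lemma~\ref{lem:2}) uses the invariant rank-two subgroup $\langle x, xh\rangle$ whose fixed set is $\{p_+,p_-\}$ together with its two distinguished index-two subgroups, not merely fixed-point-freeness of $\lambda$ on a single $2$-sphere. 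Your representation-theoretic description of $V=T_{p_+}\Sigma$ as a sum of four distinct characters with trivial product is correct and consistent with the Borel-formula count the paper uses, but the proof as proposed is incomplete at its core and would fail to cover the order-$6$ case.
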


\begin{proof}
There are seven subgroups of index $2$ in $Z$. As shown in \cite{MZ0}, Lemma 4.1, the
Borel Formula implies that four of the subgroups have $1$-dimensional fixed-point set,
which is a $1$-sphere, and three subgroups have $0$-dimensional fixed-point set. 
Equivalently, of the seven involutions of $Z$, six of them has fixed-point set a $2$-sphere,
and exactly one involution has fixed point set a $0$-sphere, which is given by 
$\Sigma^Z=\{p_{+},p_{-}\}$. 

Denote by $h\in Z$ the involution with $\Sigma^h=\Sigma^Z=\{p_{+},p_{-}\}$. We denote the action
of any $g\in G$ on $Z$ given by conjugation by $g(x)=gxg^{-1}$, $\forall x\in Z$. Then clearly 
$g(h)=h$, $\forall g\in G$. 

We first prove that either $\lambda^2=1$ 
or $\lambda^6=1$ for any $\lambda\in G\setminus H$.
Let $Z_0=Z\setminus\{1,h\}$ be the set of the six involutions whose fixed-point set is
a $2$-sphere. Let $\lambda\in G\setminus H$ such that $\lambda^2\neq 1$.  
We shall examine the action of $\lambda$ on $Z_0$.

Two observations first: (a) if $x\in Z_0$ is fixed under $\lambda$, i.e., $\lambda(x)=x$,
so is $xh\in Z_0$, as $\lambda(xh)=\lambda(x)\lambda(h)=xh$; (b) for any $x\in Z_0$,
if $xh=\lambda^k(x)$ is in the orbit of $x$, then $\lambda^{2k}(x)=\lambda^k (xh)=\lambda^k(x)\lambda^k(h)=xhh=x$. 

With this understood, we first show that $\lambda$ can not act transitively on $Z_0$. This is
because for any $x\in Z_0$, (b) implies that $xh=\lambda^3(x)$. It follows easily
that the orbit $x,\lambda(x),\cdots, \lambda^5(x)$ has the following form
$$
x,y,z, xh,yh,zh,
$$
where $z=xy$ or $xyh$. Assume first that $z=xy$. Then the subgroup generated by $x,y,z$
is transformed under $\lambda$ to the subgroup generated by $y,z,xh$. However, this is
a contradiction because the two subgroups have different ranks. In the case where $z=xyh$,
the subgroup generated by $zh,x,y$ is transformed under $\lambda$ to the subgroup
generated by $x,y,z$. The former has rank two but the latter has rank three, which is also
a contradiction. Hence the claim follows. 

Next, it follows easily from (a) that the action of $\lambda$ on $Z_0$ can not have an orbit 
consisting of five elements. Now consider the next case where the action of $\lambda$ has 
an orbit consisting of four elements, say $x_1,x_2,x_3,x_4$. Let $\{x,y\}=Z_0\setminus\{x_i\}$ 
be the complement. We claim $y=xh$. To see this, note that if $x$ is fixed under $\lambda$, 
then so is $xh$,
which implies $xh\in Z_0\setminus\{x_i\}$. Hence $y=xh$. If $x$ is not fixed under $\lambda$,
then $y=\lambda(x)$, $x=\lambda(y)$, so that $xy$ is fixed under $\lambda$. This implies 
that $xy=h$, which is equivalent to $y=xh$. With this understood, note that the subgroup
generated by $x,y$ is invariant under $\lambda$, which has rank two, and furthermore,
note that its fixed-point set is $\{p_{+},p_{-}\}$, and that exactly
two subgroups of index $2$ has fixed-point
set a $2$-sphere. The argument of the previous lemma then shows that $\lambda^2=1$ must be
true, which is a contradiction. This proves the claim that the action of $\lambda$ can not have 
an orbit consisting of four elements. 

Suppose the action of $\lambda$ has an orbit consisting of three elements $x_1,x_2,x_3$.
We will show that $\lambda^6=1$. To see this, note first that (b) implies that 
for any $1\leq i,j \leq 3$, $x_ih\neq x_j$. It follows easily
that $x_1h,x_2h,x_3h$ form another orbit. Moreover, $x_3=x_1x_2$ or $x_3=x_1x_2h$.
In the former case, $x_1,x_2,x_3$ generate a rank two subgroup $Z^\prime$ of $Z$ 
invariant under $\lambda$, whose fixed-point set $\Sigma^{Z^\prime}$ is a $1$-sphere 
containing $\{p_{+},p_{-}\}$ because each $\Sigma^{x_i}$ is a $2$-sphere 
({\it cf.} \cite{MZ0}, Lemma 4.1). 
Moreover, $\lambda$ permutes the $2$-spheres $\Sigma^{x_1}$,$\Sigma^{x_2}$,
$\Sigma^{x_3}$. The $1$-sphere $\Sigma^{Z^\prime}$ is the intersection of $\Sigma^{x_1}$,
$\Sigma^{x_2}$, $\Sigma^{x_3}$, which is invariant under $\lambda$. Since $\lambda$ 
switches $p_{+}$ and $p_{-}$, it follows that $\lambda^2$ fixes $\Sigma^{Z^\prime}$. Now there 
are four $2$-spheres $\Sigma^{x_1}$,$\Sigma^{x_2}$,
$\Sigma^{x_3}$, and $\Sigma^{\lambda^2}$ intersecting at the $1$-sphere $\Sigma^{Z^\prime}$.
Examining the action of $\lambda$ on the normal bundle of $\Sigma^{Z^\prime}$, we see that
$\lambda^6$ acts as identity in a neighborhood of $\Sigma^{Z^\prime}$, which implies 
$\lambda^6=1$. In the latter case, apply the above
argument to the orbit $x_1h,x_2h,x_3h$, which generates a $\lambda$-invariant subgroup
of rank two. Hence the claim $\lambda^6=1$ follows. 

If the action of $\lambda$ has  an orbit consisting of two elements $x_1,x_2$, then we claim 
that there is a $z\in Z_0$ such that the set $\{z,zh\}$ is invariant under $\lambda$, which we 
have seen is a contradiction to the assumption $\lambda^2\neq 1$ as we argued in the
previous lemma. To see the claim, note that if $x_2=x_1h$, then we are done. If $x_2\neq x_1$,
then $x_1h,x_2h$ form another orbit. The complement $Z_0\setminus \{x_i,x_ih\}$ is a set
consisting of $z,zh$ which is invariant under $\lambda$. Hence the claim follows. 

Finally, if $\lambda$ fixes every element of $Z_0$, then every subset $\{x,xh\}$ is invariant 
under $\lambda$, which is a contradiction. This shows that either $\lambda^2=1$ 
or $\lambda^6=1$ for any $\lambda\in G\setminus H$.

We shall further rule out  the possibility that $\lambda$ has order $6$. To this end, 
we consider the centralizer $C_G(Z)$ of $Z$.
We first show that for any $g\in C_G(Z)$, $g^2=1$. To see this, note that the action of $g$ 
by conjugation leaves each $2$-sphere $\Sigma^z$, where $z\in Z_0$, invariant, which all contains 
$\{p_{+},p_{-}\}$. In particular, consider $2$-spheres $\Sigma^x$, $\Sigma^y$, $\Sigma^{xy}$, 
for some choices of $x,y\in Z_0$ with $xy\in Z_0$. The action of $g$ leaves each of them invariant, 
and so also leaves their intersection, which is a $1$-sphere, invariant. If $g\in H$, then $g$ fixes 
$\{p_{+},p_{-}\}$, and $g$ is either identity or a reflection on the $1$-sphere. 
It follows easily that $g^2$ fixes each of the $2$-spheres $\Sigma^x$, $\Sigma^y$, $\Sigma^{xy}$, 
hence $g^2=1$. If $g\in G\setminus H$, then $g$ switches $p_{+}$ and $p_{-}$, and the action of
$g$ on the $1$-sphere is by a rotation of order $2$. It follows also that $g^2$ fixes each of the 
$2$-spheres $\Sigma^x$, $\Sigma^y$, $\Sigma^{xy}$, and hence $g^2=1$. With this understood,
we see that $C_G(Z)$ is an elementary Abelian $2$-group. Now note that $S\subseteq C_G(Z)$ 
because
$Z$ lies in the center of $S$, so that $S=Z$ by the maximality of $Z$. On the other hand, since
each Sylow p-subgroup for odd $p$ lies in $C_G(Z)$, we see that $F=S=Z$ immediately. 

With the above preparation, we now rule out the possibility that $\lambda$ has order $6$.
The key observation is that in the proof of $\lambda^6=1$, we also showed that in that case
$\lambda^3$ acts on $Z=F$ trivially. But we know that $G/F$ is a subgroup of $\text{Out }(F)$. 
This implies that $\lambda^3\in F=Z\subset H$. 
Consequently, $\lambda\in H$ because $\lambda^2\in H$, which is a contradiction. 
This rules out  the possibility that $\lambda$ has order $6$, and consequently, 
$\lambda^2=1$ for any $\lambda\in G\setminus H$. The proof of Lemma \ref{lem:3} is
complete.

\end{proof}

Now we enter Stage 2 of the proof, where we assume that for every Sylow p-subgroup of $F$,
(i) is true. Furthermore, without loss of generality, we assume that there is a $\lambda \in
G\setminus H$ such that $\lambda^2\neq 1$. We remark that the Atiyah-Bott Theorem, through 
Lemma \ref{lem:1}, played a crucial role in this stage. 

\begin{prop}\label{prop:2}
Suppose (i) is true for every Sylow p-subgroup of $F$. Then $G$ is isomorphic 
to a subgroup of $\OO{4}$.
\end{prop}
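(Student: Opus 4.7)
By Observation~\ref{obs:2}, I may assume there exists $\lambda\in G\setminus H$ with $\lambda^2\neq 1$, since otherwise $G\subset\SO{4}\subset\OO{4}$ already. My plan is to use Lemma~\ref{lem:1} together with hypothesis~(i) to pin down the Fitting subgroup $F$, invoke the classification of finite subgroups of $\SO{4}$ to enumerate the remaining possibilities for $G$, and finally construct an embedding $G\hookrightarrow\OO{4}$ case by case using the Atiyah--Bott Theorem.

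First I would analyze the conjugation action of $\lambda$ on each $Z_p$ for odd $p$. Let $h_p$ generate $Z_p=\Omega_1(Z(S_p))\cong\ZZ_p$. Since $Z_p$ has prime order, every nontrivial power of $h_p$ generates $Z_p$, so $\Sigma^{h_p^k}=\{p_{+},p_{-}\}$ whenever $h_p^k\neq 1$, and $h_p^2\neq 1$ because $p\geq 3$. Thus Lemma~\ref{lem:1} applies; combined with the normality of $Z_p$ in $G$ (which forces $\lambda h_p\lambda^{-1}=h_p^k$ for some $k$), it gives $k\not\equiv\pm 1\pmod p$. In particular no such $k$ exists for $p=3$, so the $3$-part of $F$ is trivial, and for each remaining odd $p$ the conjugation action of $\lambda$ on $Z_p$ is a nontrivial element of $(\ZZ/p)^{\times}/\{\pm 1\}$.

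Second I would exploit that $F\subset H\subset\SO{4}$ (each odd Sylow of $F$ lies in $H$ automatically, since $F\cap H$ has index at most $2$ in $F$ and odd-order subgroups cannot have index~$2$). Using the double cover $\mathrm{Spin}(4)=\mathrm{SU}(2)\times\mathrm{SU}(2)$, odd-order finite subgroups of $\SO{4}$ are Abelian, so each $S_p$ for odd $p$ is Abelian with cyclic socle and hence cyclic. The Sylow $2$-subgroup $S_2$ is a $2$-subgroup of $\SO{4}$ with $\Omega_1(Z(S_2))\cong\ZZ_2$ and $\Sigma^{Z_2}=\{p_{+},p_{-}\}$, which constrains it to a short explicit list. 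Combined with the embedding $G/F\hookrightarrow\text{Out}(F)$ and the constraints $k\not\equiv\pm 1\pmod p$ from the first step, this reduces $G$ to a finite list of isomorphism types.

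Finally I would construct the $\OO{4}$-embedding by extending the faithful representation $H\hookrightarrow\SO{4}$ coming from the differential at $p_{+}$. The Atiyah--Bott Theorem supplies an orientation-reversing linear isometry $\phi\colon T_{p_{+}}\Sigma\to T_{p_{-}}\Sigma$ intertwining the $\langle h\rangle$-action for every $h\in H$ with $\Sigma^h=\{p_{+},p_{-}\}$; post-composing with $d\lambda$ yields a natural candidate $\sigma\in\OO{4}\setminus\SO{4}$ for the image of $\lambda$. I expect the main obstacle to be verifying that $\sigma$ conjugates the image of $H$ to itself realizing the prescribed outer automorphism and that $\sigma^2$ matches the $\SO{4}$-image of $\lambda^2\in H$---equivalently, that a cohomological obstruction in $H^2(\ZZ_2;Z(H))$ vanishes. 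The sharp restrictions on $F$, and hence on $Z(H)$, provided by Lemma~\ref{lem:1} and the $\SO{4}$-classification are precisely what is needed to make this final check go through in each of the remaining cases.
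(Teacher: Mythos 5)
Your opening reduction (via Observation~\ref{obs:2}) and your use of Lemma~\ref{lem:1} to show that conjugation by $\lambda$ acts on each odd $Z_p$ by some $k\not\equiv\pm 1\pmod p$ --- hence kills the $3$-part of $F$ and, together with the rank-one hypothesis, makes the odd Sylow subgroups of $F$ cyclic --- are correct and consistent with the paper. But the two steps that carry the actual weight of the proof are missing. First, you never determine the Sylow $2$-subgroup $S_2$ of $F$: ``constrained to a short explicit list'' still leaves cyclic $2$-groups of arbitrary order and generalized quaternion groups, all of which have $\Omega_1(Z(S_2))\cong\ZZ_2$ and embed in $\SO{4}$. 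The paper needs, and proves, that $S_2\cong\ZZ_2$ exactly (Lemma~\ref{lem:4}); this is not a classification lookup but an inductive argument on powers of $\lambda$ (Observation~\ref{obs:4}, resting on Observation~\ref{obs:3} and Lemma~\ref{lem:1}) ruling out any element $h$ of order $2^n$ with $n\geq 2$, all of whose nontrivial powers fix exactly $\{p_{+},p_{-}\}$, whose cyclic group is normalized by some $\lambda\in G\setminus H$. Without this, $F$ need not be cyclic and your ``finite list of isomorphism types of $G$'' is never actually produced.

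Second, and more seriously, your final step defers exactly the hard point. The paper's route is to prove (Lemma~\ref{lem:5}) that $H$ is the dihedral group generated by the cyclic group $F$ and the involution $\mu=\lambda^2$, so that $G$ acquires a completely explicit presentation and the embedding given by $\lambda\cdot(z_1,z_2)=(\bar z_2,z_1)$ can simply be written down and checked. The crucial technical input there is that the tangent representation of each odd Sylow subgroup at $p_{\pm}$ splits into two \emph{non-equivalent} real $2$-dimensional summands --- established by the same Atiyah--Bott determinant argument used in Lemma~\ref{lem:1} --- which is what forces every $g\in H$ to act on all of $F$ uniformly by $+1$ or by $-1$, and hence forces $H$ to be dihedral with $\mu^2=1$. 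Your proposal contains no substitute for this; asserting that the extension obstruction in $H^2(\ZZ_2;Z(H))$ ``vanishes in each remaining case'' is precisely the statement to be proved, not a consequence of anything you have established. There is also a technical wrinkle in your construction of $\sigma$: the action is only locally linear, so $d\lambda$ at $p_{+}$ is not defined; one must instead argue with the topological conjugacy of the two local $H$-representations, or, as the paper does, sidestep the issue entirely by building the $\OO{4}$-representation abstractly from the presentation of $G$.
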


It turns out that the Fitting subgroup $F$ must be cyclic, and 
there is a $\lambda \in G\setminus H$ of order $4$, such
that $H$ is the dihedral group generated by $F$ and $\lambda^2$. 

We begin with a group-theoretic observation.

\begin{obs}\label{obs:3}
Let $\xi$ be an element of order $p^n$ where $p$ is prime. Suppose $\lambda$ 
is an element such that 
$$
\lambda\xi\lambda^{-1}=\xi^k, \mbox{ where } k\neq 1 \pmod{p^n}
$$
and $\lambda^2$ commutes with $\xi$. Then
\begin{itemize}
\item if $p$ is odd, then $k=-1 \pmod{p^n}$;
\item if $p=2$, then either $k=-1 \pmod{p^n}$ or $k=\pm 1 \pmod{p^{n-1}}$. Moreover, in the
latter case, $n\geq 3$, and $\lambda\xi^2\lambda^{-1}=\xi^{\pm 2}$. 
\end{itemize}
\end{obs}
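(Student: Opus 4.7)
The plan is to translate the hypothesis $[\lambda^2,\xi]=1$ into the elementary number-theoretic statement $k^2\equiv 1\pmod{p^n}$, and then enumerate the square roots of $1$ in $(\ZZ/p^n\ZZ)^\ast$.

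First I would compute $\xi = \lambda^2\xi\lambda^{-2}$ (using that $\lambda^2$ commutes with $\xi$) in a second way, namely
$\lambda^2\xi\lambda^{-2} = \lambda(\lambda\xi\lambda^{-1})\lambda^{-1} = \lambda\xi^k\lambda^{-1} = (\lambda\xi\lambda^{-1})^k = \xi^{k^2}$.
Comparing exponents and using that $\xi$ has order $p^n$ yields $k^2\equiv 1\pmod{p^n}$. The rest of the proof is then purely a matter of enumerating the square roots of unity modulo $p^n$.

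For odd $p$, the group $(\ZZ/p^n\ZZ)^\ast$ is cyclic of order $p^{n-1}(p-1)$, so the only square roots of $1$ are $\pm 1$; together with the hypothesis $k\not\equiv 1$ this forces $k\equiv -1\pmod{p^n}$. For $p=2$, I would invoke the classical structure $(\ZZ/2^n\ZZ)^\ast\cong \ZZ/2\times\ZZ/2^{n-2}$ valid for $n\geq 3$ (with generators represented by $-1$ and $5$). The subgroup of elements of order dividing $2$ is then a Klein four-group whose representatives mod $2^n$ are $\{\,1,\ -1,\ 2^{n-1}-1,\ 2^{n-1}+1\,\}$. Excluding $k\equiv 1\pmod{2^n}$, we thus have either $k\equiv -1\pmod{2^n}$, or $k\equiv 2^{n-1}\pm 1\pmod{2^n}$; the latter two possibilities reduce to $k\equiv \pm 1\pmod{2^{n-1}}$. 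For $n=1,2$ the group $(\ZZ/2^n\ZZ)^\ast$ is generated by $-1$ alone, so no ``new'' roots exist and the second alternative can only occur when $n\geq 3$. Finally, in that latter case $k=\pm 1+2^{n-1}$ gives $2k\equiv\pm 2\pmod{2^n}$, so $\lambda\xi^2\lambda^{-1}=\xi^{2k}=\xi^{\pm 2}$.

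I do not foresee a substantive obstacle: the content of the observation is entirely elementary, and the only step that requires a small check is recognizing that the commutativity hypothesis $\lambda^2\xi=\xi\lambda^2$ translates into $k^2\equiv 1\pmod{p^n}$. From there the analysis of $(\ZZ/p^n\ZZ)^\ast$ proceeds case-by-case along the lines sketched above.
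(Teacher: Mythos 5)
Your proposal is correct and follows essentially the same route as the paper: both reduce the commutation hypothesis to $k^2\equiv 1\pmod{p^n}$ and then classify the square roots of unity modulo $p^n$. The only (cosmetic) difference is in that classification step, where the paper argues directly from the factorization $(k-1)(k+1)\equiv 0\pmod{p^n}$ via $p$-adic valuations, while you invoke the structure of the unit group $(\ZZ/p^n\ZZ)^\ast$; both yield the same list of possibilities and the same conclusion.
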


\begin{proof}
First, the assumptions $\lambda\xi\lambda^{-1}=\xi^k$ and $\lambda^2$ commutes with $\xi$
imply that $k^2=1 \pmod{p^n}$, or equivalently, $(k-1)(k+1)=0 \pmod{p^n}$. Since 
$k\neq 1 \pmod{p^n}$, one must have either $k=-1 \pmod{p^n}$, or else there exist $m_1,m_2$
satisfying $1\leq m_1,m_2\leq n-1$ and $m_1+m_2\geq n$, such that 
$$
k-1=p^{m_1} u_1, k+1=p^{m_2}u_2
$$
for some $u_1,u_2$ which are relatively prime to $p$. 

We continue the discussion assuming that $k\neq -1 \pmod{p^n}$. Consider first the case
where $m_1\leq m_2$. We write 
$$
2k=p^{m_1} u_1+p^{m_2}u_2=p^{m_1}(u_1+p^{m_2-m_1}u_2),
$$
which implies $p^{m_1}=2$. Consequently, $p=2$, $m_1=1$, and $m_2\geq n-m_1=n-1$
which implies $m_2=n-1$ and $k=-1 \pmod{p^{n-1}}$. Finally, observe that $m_2-m_1>0$ 
because $k$ is odd, which implies that $n=m_1+m_2\geq 3$. For the case where 
$m_2\leq m_1$, a similar argument shows that $k=1 \pmod{p^{n-1}}$, with $n\geq 3$.

\end{proof}

The following observation plays a key role in determining the Sylow 2-subgroup of $F$.

\begin{obs}\label{obs:4}
Let $h\in H$ be an element of order $2^n$ such that for any $k$ with $h^k\neq 1$, 
$\Sigma^{h^k}=\{p_{+},p_{-}\}$.
If the normalizer of the subgroup generated by $h$ contains a $\lambda\in G\setminus H$, then
$n=1$, i.e., $h^2=1$.
\end{obs}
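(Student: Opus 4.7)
I would argue by contradiction, assuming $n\geq 2$ (so $h^2\neq 1$) and writing $\lambda h\lambda^{-1}=h^k$ for some integer $k$ coprime to $2$. Lemma \ref{lem:1} applied to $h$ immediately forbids $k\equiv\pm 1\pmod{2^n}$; the whole task is to rule out every other value of $k$.

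The key step is to use the Atiyah--Bott theorem to pin down $k$ modulo $2^n$. The group $\langle h\rangle$ acts semifreely on $\Sigma$ with fixed set $\{p_+,p_-\}$, so Atiyah--Bott supplies an orientation-reversing, $\langle h\rangle$-equivariant isomorphism $\tau\colon T_{p_+}\Sigma\to T_{p_-}\Sigma$. The differential $d\lambda_{p_+}\colon T_{p_+}\Sigma\to T_{p_-}\Sigma$ is orientation-preserving (since $\lambda$ preserves the orientation of $\Sigma$) and satisfies $d\lambda_{p_+}\circ h=h^k\circ d\lambda_{p_+}$, so $\psi:=\tau^{-1}\circ d\lambda_{p_+}$ is an automorphism of $V:=T_{p_+}\Sigma$ with $\psi h\psi^{-1}=h^k$. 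Hence $h$ and $h^k$ have identical eigenvalue multisets on $V\otimes\CC$. The hypothesis that each nontrivial $h^j$ fixes only $\{p_+,p_-\}$ forces $V$ to split as the sum of two $2$-dimensional rotation subrepresentations with angles $2\pi r_1/2^n$ and $2\pi r_2/2^n$, where $r_1,r_2$ are odd. Pairing up conjugate eigenvalue pairs now yields either (A) $kr_i\equiv\pm r_i\pmod{2^n}$ for each $i$, whence $k\equiv\pm 1\pmod{2^n}$ and we conclude by Lemma \ref{lem:1}; or (B) $kr_1\equiv\pm r_2$ and $kr_2\equiv\pm r_1\pmod{2^n}$, whose product gives $k^2\equiv\pm 1\pmod{2^n}$.

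It remains to kill case (B). The sub-case $k^2\equiv -1\pmod{2^n}$ is impossible for $n\geq 2$: directly by inspection for $n=2$, and for $n\geq 3$ because $-1$ generates the $\ZZ/2$ direct summand of $(\ZZ/2^n)^\times\cong\ZZ/2\times\ZZ/2^{n-2}$ and so is not a square. In the sub-case $k^2\equiv 1\pmod{2^n}$, $\lambda^2$ commutes with $h$, so Observation \ref{obs:3} applies with $p=2$ and $\xi=h$; since $k\not\equiv\pm 1\pmod{2^n}$ rules out its first alternative, the observation forces $n\geq 3$ and $\lambda h^2\lambda^{-1}=h^{\pm 2}$. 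But $h^2$ still has order $2^{n-1}\geq 4$ and inherits the fixed-point hypothesis, so Lemma \ref{lem:1} applied to $h^2$ gives $\lambda\notin C_G^\ast(h^2)$, contradicting $\lambda h^2\lambda^{-1}=(h^2)^{\pm 1}$.

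I expect the Atiyah--Bott step to be the main obstacle, since it is the only point at which genuine geometric input enters: one has to splice the Atiyah--Bott isomorphism with the differential of $\lambda$ while keeping track of orientations, in order to convert the group-theoretic identity $\lambda h\lambda^{-1}=h^k$ into an internal conjugacy of $h$ with $h^k$ inside $GL(V)$. Once that is in place, the rest is a brief $2$-adic calculation together with two applications of Lemma \ref{lem:1} (the second to $h^2$).
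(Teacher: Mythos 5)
Your proof is correct, but it takes a genuinely different route from the paper's. The paper never invokes the Atiyah--Bott theorem directly inside this observation: it first replaces $\lambda$ by an odd power so that $|\lambda|=2^{m+1}$, sets $\mu=\lambda^{2^m}$ (so that $\mu^2=1$ commutes with $h$ for free), applies Observation \ref{obs:3} to $\mu$ to obtain $\mu\xi\mu^{-1}=\xi^{\pm 1}$ for some $\xi\in\{h,h^2\}$ of $2$-power order greater than $2$, and then descends through $\lambda^{2^{m-1}},\lambda^{2^{m-2}},\dots$ by induction --- at each step using that $-1$ is not a square modulo $2^j$ for $j\geq 2$ to upgrade $\xi^{-1}$ to $\xi$ --- until it reaches $\lambda$ itself and contradicts Lemma \ref{lem:1}. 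You instead work with $\lambda$ directly and use the Atiyah--Bott isomorphism a second time (outside its packaged use in Lemma \ref{lem:1}) to force the eigenvalue identity $k^2\equiv\pm 1\pmod{2^n}$, which substitutes for the paper's reduction to an involution; a single application of Observation \ref{obs:3} and then of Lemma \ref{lem:1} to $h^2$ finishes. Your route is shorter and avoids the induction, at the price of redoing the tangent-space analysis (interpreting $d\lambda_{p_+}$ as a linear intertwiner in the merely locally linear setting) that the paper deliberately confines to the proof of Lemma \ref{lem:1}, which is then reused as a black box. Two small remarks: in your case (A) the signs for $i=1,2$ could a priori differ, which yields $2\equiv 0\pmod{2^n}$ and hence the same contradiction with $n\geq 2$; and the orientation-preserving property of $d\lambda_{p_+}$ plays no role in your argument, since you only use that $\psi$ conjugates $h$ to $h^k$.
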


\begin{proof}
We assume $n\geq 2$, and derive a contradiction.

Note that $\lambda\in G\setminus H$ has an even order, say $|\lambda|=k 2^{m+1}$ where
$k$ is odd and $m\geq 0$. Then $\lambda^k\in G\setminus H$ and is also contained in the
normalizer of the subgroup generated by $h$. After replacing $\lambda$ by $\lambda^k$,
we may assume $\lambda$ has order $2^{m+1}$, $m\geq 0$.

Set $\mu=\lambda^{2^m}$. Then $\mu^2=1$, and by Observation \ref{obs:3}, we have
$\mu h \mu^{-1}=h^k$, where $k$ satisfies either $k=\pm 1 \pmod{2^n}$ or 
$k=\pm 1 \pmod{2^{n-1}}$ with $n\geq 3$. It follows that $\mu\xi\mu^{-1}=\xi^{\pm 1}$
for $\xi=h$, or $h^2$, with the order of $\xi$ being a power of $2$ which is greater than $2$.

If $m=0$, then $\mu=\lambda\in G\setminus H$, which contradicts Lemma \ref{lem:1}.
Suppose $m>1$. We claim $\mu\xi\mu^{-1}=\xi$ must be true. Suppose to the
contrary that $\mu\xi\mu^{-1}=\xi^{-1}$. Then if we set $\tau=\lambda^{2^{m-1}}$, then
$\tau\xi\tau^{-1}=\xi^k$ for some $k$, where $k$ satisfies $k^2=-1 \pmod{|\xi|}$. But 
there is no solution for $k$ if $|\xi|$ is a power of $2$ greater than
$2$. Hence the claim follows. Now applying Observation \ref{obs:3} to $\tau$, we get 
$\tau\xi\tau^{-1}=\xi^{\pm 1}$ for some $\xi$, with the order of $\xi$ being a power of $2$ 
which is greater than $2$. By induction, we get a contradiction. 

\end{proof}

As an immediate corollary, we obtain 

\begin{lemma}\label{lem:4}
The Sylow $2$-subgroup $S$ of the Fitting subgroup $F$ is of order $2$.
\end{lemma}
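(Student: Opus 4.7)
The plan is to combine Observation~\ref{obs:4} with the rigidity of $2$-groups acting freely on $S^3$ (available via local linearity at $p_+$). I first reduce to showing $|Z(S)|=2$. Condition (i) says $Z=\Omega_1(Z(S))$ is cyclic of order $2$, so $Z(S)$ itself is cyclic. Suppose for contradiction $|Z(S)|=2^n$ with $n\geq 2$, and write $Z(S)=\langle h\rangle$; then $\langle h\rangle$ is characteristic in $S$, hence normal in $G$. If $h\in H$, Observation~\ref{obs:4} applies directly ($z$ being the unique involution of $\langle h\rangle$ guarantees $\Sigma^{h^k}=\{p_+,p_-\}$ whenever $h^k\neq 1$), yielding $n=1$, a contradiction. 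If $h\notin H$ and $n\geq 3$, the same application to $h^2\in H$ gives $n-1=1$, again impossible. The leftover case $h\notin H$, $n=2$ contradicts the Smith-theoretic fact used earlier in the section: we would have $h\in G\setminus H$ with $h^2\neq 1$, forcing $\Sigma^{h^2}$ to be a $2$-sphere, yet $\Sigma^{h^2}=\Sigma^z=\{p_+,p_-\}$ is a $0$-sphere.

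Next I extract structural information about $S\cap H$. Since $z\in S\cap H\subseteq H$, we have $\{p_+,p_-\}\subseteq \Sigma^{S\cap H}\subseteq \Sigma^z=\{p_+,p_-\}$, so $\Sigma^{S\cap H}=\{p_+,p_-\}$. Local linearity at $p_+$ then supplies a faithful orientation-preserving orthogonal representation of $S\cap H$ on $T_{p_+}\Sigma\cong \RR^4$ with origin as its only fixed point, i.e., $S\cap H$ acts freely on $S^3$; by the classical classification of $2$-groups with periodic cohomology, $S\cap H$ is cyclic or generalized quaternion. I now case split. In Case A ($S\subseteq H$), $S$ is cyclic or $Q_{2^n}$: if cyclic, then $|S|=|Z(S)|=2$; if $S=Q_{2^n}$ with $n\geq 3$, the characteristic cyclic subgroup $\langle a\rangle\leq S$ of order $2^{n-1}$ is normal in $G$ with unique involution $z$, and Observation~\ref{obs:4} applied to $a\in H$ returns $|a|=2$, a contradiction. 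In Case B ($|S:S\cap H|=2$), set $|S|=2^n$. If $S\cap H=\langle a\rangle$ is cyclic of order $2^{n-1}$, a short computation (each $s\in S$ acts on $\langle a\rangle$ by $a\mapsto a^k$ with $k\in (\ZZ/2^{n-1})^\times$, hence $k$ odd) shows $a^{2^{n-2}}$ is central in $S$ and therefore equals $z$; for $n\geq 3$ Observation~\ref{obs:4} then forces $|a|=2$, while for $n=2$ the group $S$ has order $4$ and is abelian, giving $|Z(S)|=4$ and contradicting the first step. If instead $S\cap H=Q_{2^{n-1}}$ with $n-1\geq 3$, the characteristic cyclic subgroup $\langle a\rangle$ of order $2^{n-2}\geq 4$ is again normal in $G$ with unique involution $z$, and Observation~\ref{obs:4} applied to $a$ closes the argument.

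The main obstacle I anticipate is the structural step for $S\cap H$: packaging local linearity together with the isolated-fixed-point condition into a free $S^3$-action and thereby invoking the cyclic-or-generalized-quaternion dichotomy. Once that input is available, the rest is systematic bookkeeping, with each remaining configuration funneling into Observation~\ref{obs:4} (or the Smith fact in the few small-order residual cases).
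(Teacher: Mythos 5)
Your overall strategy --- pin down the structure of $S$ (or $S\cap H$) far enough to locate a cyclic subgroup, normal in $G$, all of whose nontrivial elements have fixed-point set $\{p_{+},p_{-}\}$, and then feed a generator into Observation~\ref{obs:4} --- is the same as the paper's, and your bookkeeping in the cyclic and generalized quaternion cases (including the preliminary reduction to $|Z(S)|=2$) is sound. But the step you yourself flagged as the main obstacle is where the argument genuinely breaks. From $\Sigma^{S\cap H}=\{p_{+},p_{-}\}$ you infer that the linearized action of $S\cap H$ on the unit sphere of $T_{p_{+}}\Sigma$ is free. It is not: that equality only says the \emph{common} fixed set of the whole group is $0$-dimensional, whereas freeness on $S^3$ requires each nontrivial element \emph{individually} to fix nothing but the origin. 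Condition (i) constrains only the maximal elementary abelian subgroup of the center of $S$; it does not exclude non-central involutions $r\in S\cap H$ with $\Sigma^{r}$ a $2$-sphere through $p_{\pm}$, and such an $r$ has a $2$-dimensional fixed subspace in $T_{p_{+}}\Sigma$. A dihedral $S$, for example, has $Z(S)\cong\ZZ_2$ and so is perfectly compatible with (i) and with your first step, yet then $S\cap H$ contains a copy of $\ZZ_2\times\ZZ_2$ and is neither cyclic nor generalized quaternion. In effect your ``free action'' claim is equivalent to the cyclic-or-generalized-quaternion dichotomy you are trying to prove, so the case analysis silently drops exactly the cases that need work.

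The paper gets the structural restriction purely group-theoretically: the rank-one hypothesis on $Z$ forces $S$ to be cyclic, dihedral, quaternion, or generalized quaternion (citing Suzuki), and in each of these cases the cyclic subgroups of maximal order (a single one, or three when $S=Q_8$) are permuted by conjugation, so one can find such a subgroup $C$ whose normalizer contains a given $\lambda\in G\setminus H$. Every nontrivial element of $C$ generates a subgroup containing the central involution, hence has fixed set exactly $\{p_{+},p_{-}\}$, and Observation~\ref{obs:4} applied to a generator of $C$ gives $C\cong\ZZ_2$ and so $S=Z\cong\ZZ_2$. Your reductions via characteristic cyclic subgroups could be grafted onto that framework, but as written the proposal is incomplete: you must either justify that no nontrivial element of $S$ has a $2$-dimensional fixed set, or replace the free-action input with an argument (as in the paper) that also handles the dihedral-type possibilities.
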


\begin{proof}
Since the maximal elementary Abelian subgroup $Z$ of the center of $S$ has rank one
by assumption, $S$ is either cyclic, dihedral, quaternion, or generalized
quaternion ({\it cf.} Suzuki \cite{Su}, pp. 58-59). Now observe that the number of cyclic 
subgroups of
maximal order of $S$ is one except for the case when $S$ is quaternion, and when $S$
is quaternion the number is $3$. It follows that for a given $\lambda\in G\setminus H$, 
there exists a cyclic subgroup of maximal order, denoted by $C$, whose normalizer contains
$\lambda$. On the other hand, since $Z\subseteq C$ has fixed points $\{p_{+},p_{-}\}$, it follows
that every element of $C$ has fixed points $\{p_{+},p_{-}\}$. By Observation \ref{obs:4},
$C\cong \ZZ_2\cong Z$, from which it follows that $S=Z\cong \ZZ_2$. 

\end{proof}

Since we assume that (i) is true for every Sylow p-subgroup of $F$, in particular, 
the maximal elementary Abelian subgroup $Z$ of the center of Sylow $p$-subgroup 
for odd $p$ has rank one, the Sylow $p$-subgroups with $p$ odd are all cyclic ({\it cf.}
Suzuki \cite{Su}, pp. 58-59). Consequently, with Lemma \ref{lem:4}, we see that $F$ is cyclic.
Moreover, note that $F\subseteq H$. Since $G/F$ is a subgroup of $\text{Out } F=\text{Aut } F$
which is cyclic, we see that $H/F$ is cyclic, and we conclude that $H$ is metacyclic. 

\begin{lemma}\label{lem:5}
Suppose there is a $\lambda\in G\setminus H$ such that $\mu=\lambda^2\neq 1$,
and $F$ contains a Sylow p-subgroup of odd $p$.
Then $H$ is a dihedral group generated by $F$ and $\mu$, with $\mu^2=1$.
\end{lemma}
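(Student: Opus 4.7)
The plan is to establish, in sequence: (1) that $\lambda$ acts nontrivially on each odd Sylow $\langle\xi_p\rangle\subseteq F$ by $\xi_p\mapsto\xi_p^{k_p}$ with $k_p\not\equiv\pm 1\pmod{p^{n_p}}$; (2) that $\mu^2=1$; (3) that $\mu$ inverts $F$; and (4) that $H=\langle F,\mu\rangle$. Steps (3) and (4) together then give the dihedral structure.

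For (1), I would first verify that every nontrivial power $\xi_p^j$ satisfies $\Sigma^{\xi_p^j}=\{p_+,p_-\}$: indeed $\langle\xi_p^j\rangle$ contains the rank-one socle $Z_p$ of the Sylow $p$-subgroup, and by hypothesis (i) $\Sigma^{Z_p}=\{p_\pm\}$, so $\Sigma^{\xi_p^j}\subseteq\{p_\pm\}$; the reverse inclusion is automatic since $F\subseteq H$ fixes both poles. Applying Lemma~\ref{lem:1} to $\xi_p$ (noting $p^{n_p}\geq 3$) yields (1).

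For (2), the crucial observation is that $\lambda\in C_G(\mu)\subseteq C_G^*(\mu)$ because $\lambda$ commutes with $\mu=\lambda^2$. If the hypothesis of Lemma~\ref{lem:1} held for $h=\mu$, we would obtain the contradiction $\lambda\notin C_G^*(\mu)$. Hence either $\mu^2=1$ (done), or some $\mu^j\neq 1$ has $\Sigma^{\mu^j}\supsetneq\{p_+,p_-\}$. In the latter case, Smith theory applied to the prime-power components of $\mu^j$, together with the fact that two distinct fixed $2$-spheres in $S^4$ meet in a $0$-sphere, forces $\Sigma^{\mu^j}$ to be a single $2$-sphere $\Sigma_0$ through $\{p_\pm\}$. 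This is the main obstacle. I would rule out this possibility by combining Observation~\ref{obs:4} applied to the $2$-part of $\mu$ (which $\lambda$ normalizes via centralization, with $\lambda\in G\setminus H$) with the non-commutation of $\mu$ and $\xi_p$ (since $\mu\xi_p\mu^{-1}=\xi_p^{k_p^2}$ and $k_p^2\not\equiv 1\pmod{p^{n_p}}$) to derive a contradictory constraint on the $\lambda$-invariant $2$-sphere $\Sigma_0$.

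For (3), once $\mu^2=1$, the relation $\mu\xi_p\mu^{-1}=\xi_p^{k_p^2}$ and $\mu^2=1$ yield $k_p^4\equiv 1\pmod{p^{n_p}}$; combined with $k_p\not\equiv\pm 1$ and the cyclicity of $(\mathbb{Z}/p^{n_p})^*$, this forces $k_p$ to have order exactly $4$, so $k_p^2\equiv -1\pmod{p^{n_p}}$, and by CRT $\mu$ inverts $F$. For (4), let $L$ denote the image of the embedding $H/F\hookrightarrow\operatorname{Aut}(F)=(\mathbb{Z}/|F|)^*$; it is cyclic and contains $-1$. For each $\beta\in H$ with $\beta\xi_p\beta^{-1}=\xi_p^{k'_p}$, the element $\lambda\beta\in G\setminus H$ conjugates $\xi_p$ to $\xi_p^{k_pk'_p}$, and applying Lemma~\ref{lem:1} to $\xi_p$ via $\lambda\beta$ gives $k_pk'_p\not\equiv\pm 1\pmod{p^{n_p}}$, i.e.\ $\pm k_p\notin L_p$. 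Since $\pm k_p$ are the only order-$4$ elements of the cyclic group $(\mathbb{Z}/p^{n_p})^*$, the cyclicity of $L_p$ together with $-1\in L_p$ forces $L_p=\{\pm 1\}$. Thus $L$ is a cyclic subgroup of the elementary abelian product $\prod_p\{\pm 1\}$, so $|L|=2$, yielding $|H/F|=2$ and $H=\langle F,\mu\rangle$, completing the proof.
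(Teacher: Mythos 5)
Your steps (1) and (3) are sound, but there are genuine gaps at the two places where the real work happens: the proof that $\mu^2=1$ (your step (2)) and the proof that $H=\langle F,\mu\rangle$ (your step (4)). For step (2): applying Lemma~\ref{lem:1} to $h=\mu$ is vacuous, because by the Smith-theory fact recorded just before Lemma~\ref{lem:2}, $\Sigma^{\mu}=\Sigma^{\lambda^2}$ is \emph{always} a $2$-sphere containing $\{p_+,p_-\}$; so the hypothesis ``$\Sigma^{\mu^k}=\{p_+,p_-\}$ for all $\mu^k\neq 1$'' never holds, the case you call ``the main obstacle'' is not an exceptional branch but the only branch, and your proposed resolution is a plan rather than an argument. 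Moreover its ingredients do not apply: Observation~\ref{obs:4} requires the $2$-element in question to have all nontrivial powers fixing exactly $\{p_+,p_-\}$, which fails for the $2$-part of $\mu$ since its fixed set contains the $2$-sphere $\Sigma^{\mu}$. The paper obtains $\mu^2=1$ by a different mechanism with no substitute in your proposal: $H$ sits in $\SO{4}$, $K=\langle F,\mu\rangle$ is metacyclic, and projecting through the double cover $\SO{4}\to\SO{3}\times\SO{3}$ and using that metacyclic subgroups of $\SO{3}$ are cyclic or dihedral forces $\mu^2$ to centralize the odd part of $F$; Observation~\ref{obs:3} and Lemma~\ref{lem:1} then give $\mu\xi\mu^{-1}=\xi^{-1}$, the image of $K$ is dihedral, and $\mu^2=1$ because the covering map is injective on $\langle\mu\rangle$ (as $\mu$ has nonempty fixed set).

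Step (4) contains a concrete logical error. From Lemma~\ref{lem:1} applied to $\lambda\beta$ you only learn $k'_p\notin\{k_p,-k_p\}$, i.e.\ that $L_p$ avoids the two elements of order $4$ in $(\ZZ/p^{n_p})^*$. A cyclic subgroup of $(\ZZ/p^{n_p})^*$ containing $-1$ and avoiding the order-$4$ elements need not be $\{\pm 1\}$: any cyclic subgroup of order $2m$ with $m>1$ odd (for instance order $6$ when $3\mid p-1$) satisfies both constraints, so you cannot conclude $L_p=\{\pm 1\}$. The missing input, which the paper again extracts from the $\SO{3}\times\SO{3}$ projection, is that $g^2$ centralizes each odd Sylow subgroup of $F$ for \emph{every} $g\in H$, whence $(k'_p)^2\equiv 1$ and $k'_p\equiv\pm 1$ by Observation~\ref{obs:3}; the paper then uses the Atiyah--Bott theorem, via the non-equivalence of the two $2$-dimensional summands of the tangent representation at $p_\pm$, to see that the sign is the same for all $p$. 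Your closing reduction ($L$ cyclic inside $\prod_p\{\pm 1\}$, hence of order $2$) is a reasonable shortcut for that last consistency step, but it only becomes available once $L_p\subseteq\{\pm 1\}$ has actually been established.
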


\begin{proof}
We will exploit the double cover $\rho: \SO{4}\rightarrow \SO{3}\times \SO{3}$, whose kernel
is $\pm I$. Note that $\ker \rho\cap H$ coincides with the Sylow 2-subgroup of $F$.
We denote by $K$ the subgroup of $H$ generated by $F$ and $\mu$, which is also metacyclic.
We denote by $K^\prime, H^\prime, F^\prime,\mu^\prime$ the image of $K,H,F,\mu$ under 
$\rho$ respectively. 
Note that the product of Sylow p-subgroups of $F$ with $p$ odd is mapped injectively onto
$F^\prime$. 

We will first show that $K^\prime$ is dihedral, which is generated by $F^\prime$ and $\mu^\prime$. 
To this end, we let $\pi_i$ be the projection from $\SO{3}\times \SO{3}$ onto its $i$-th factor, 
where $i=1,2$, and let $K_i=\pi_i(K^\prime)$, $F_i=\pi_i(F^\prime)$, and $\mu_i=\pi_i(\mu^\prime)$.
Since $K^\prime$ is metacyclic, so are both $K_1,K_2$. Since the only metacyclic subgroups
of $\SO{3}$ are either cyclic or dihedral, we see that $\mu_i^2$ lies in the centralizer of $F_i$.
Consequently, $(\mu^\prime)^2$ lies in the centralizer of $F^\prime$. It follows easily that
$\mu^2$ lies in the centralizer of each Sylow p-subgroup of odd order. Now observe that 
each Sylow p-subgroup of odd order acts on $\Sigma$ semifreely, it follows from 
Observation \ref{obs:3} and Lemma \ref{lem:1} (applied to $\lambda$) that $\mu \xi\mu^{-1}\neq \xi$ 
for each generator $\xi$ in the Sylow p-subgroups of odd order. Applying Observation \ref{obs:3} again
(this time to $\mu$), with the fact that $\mu^2$ lies in the centralizer of each Sylow p-subgroup of odd order,
we see that $\mu \xi\mu^{-1}=\xi^{-1}$ for each $\xi$ in the Sylow p-subgroups of odd order, 
which implies that each $K_i$ is either trivial, $\ZZ_2$, or dihedral. Consequently, 
$K^\prime$ is dihedral and $(\mu^\prime)^2=1$. Since $\mu$ has a $2$-dimensional 
fixed-point set, $\rho$ is injective on the subgroup generated by $\mu$. This implies that 
$\mu^2=1$.

Next we show that $H$ is generated by $F$ and $\mu$. 
To this end, let $H_i=\pi_i(H^\prime)$, $i=1,2$. Then
$H_i$ is either cyclic or dihedral, because $H^\prime$ is metacyclic. This implies that 
for any $g^\prime \in H^\prime$, which is the image of $g\in H$ under $\rho$, 
if we let $g_i=\pi_i(g^\prime)$, then $g_i^2$ lies in the centralizer of 
$F_i$, which implies as earlier that $g^2$ lies in the centralizer of each Sylow p-subgroup
of odd $p$. By Observation \ref{obs:3}, $g\xi g^{-1}=\xi^{\pm 1}$ on each Sylow p-subgroup 
of odd $p$.
We claim that either $g\xi g^{-1}=\xi^{-1}$ or $g\xi g^{-1}=\xi$ on $F$, which would imply in turn
that either $\mu g$ or $g$ lies in $F$, and we are done. 

The key point is the following: the real representation of each Sylow p-subgroup of odd $p$ on
the tangent spaces of $p_{+},p_{-}$ splits into a direct sum of two {\sf non--equivalent} real
$2$-dimensional representations. This would imply that each of the two $2$-dimensional representations is preserved by $g$, and furthermore, $g\xi g^{-1}=\xi^{-1}$ or $g\xi g^{-1}=\xi$ depends on whether $g$ is a reflection or a rotation on each of them. This would then imply that 
if $g\xi g^{-1}=\xi^{-1}$, respectively, $g\xi g^{-1}=\xi$ for one Sylow p-subgroup of odd $p$, then 
$g\xi g^{-1}=\xi^{-1}$, respectively, $g\xi g^{-1}=\xi$ for all other Sylow p-subgroups of odd $p$.
Hence the claim follows.

To see that the two $2$-dimensional representations are non-equivalent, suppose to the contrary
that they are equivalent. Then as argued in the proof of Lemma \ref{lem:1} (recall that
each Sylow p-subgroup of odd order acts on $\Sigma$ semifreely), the fact that 
$\lambda \xi \lambda^{-1}=\xi^k$ for some $k$ satisfying $k^2=-1 \pmod{p^n}$ implies that 
$\lambda$ is represented by a matrix $A=(A_{ij})$, such that $A_{ij}$ satisfies 
$A_{ij} H_\theta=H_{k\theta} A_{ij}$ for some $\theta$ where $\sin\theta\neq 0$. 
It follows that $A_{ij}^2$ satisfies 
$$
A_{ij}^2 H_\theta=H_{\theta}^{-1} A_{ij}^2,
$$
so that $A_{ij}^2$ has the form 
$$
\left (\begin{array}{ll}
a & a^\prime\\
a^\prime & -a\\
\end{array}
\right ).
$$
But this is a contradiction, as on the one hand, $\det A_{ij}^2=(\det A_{ij})^2\geq 0$, and on the other
hand, $\det A_{ij}^2=-a^2- (a^\prime)^2\leq 0$, which implies $A_{ij}=0$ for all $i,j$.

\end{proof}

\begin{proof} [Proof of Proposition \ref{prop:2}] If $F$ contains no Sylow p-subgroups of odd $p$, then
$F\cong \ZZ_2$, and it follows easily that $G\cong \ZZ_2$, which contradicts the assumption that there 
is a $\lambda\in G\setminus H$ such that $\lambda^2\neq 1$. Hence, $F$ must contain a Sylow 
p-subgroup of odd $p$. By Lemma \ref{lem:5},  $G$ is isomorphic to a subgroup of $\OO{4}$
by the following action: if $(z_1,z_2)$ is the complex representation of $F$ given by the
action on the tangent space at $p_{+}$ or $p_{-}$, then the action of $G$ is the one where the
action of $\lambda$ is given by $\lambda\cdot (z_1,z_2)=(\bar{z}_2, z_1)$.

\end{proof}

The proof of Theorem 1 is now complete. 

\section{Orientation reversing actions on $S^4$: local linearity and stable smoothability}

This section uses methods of surgery theory to study Conjecture 2. Since these methods are
of considerable complexity we start with general comments which we hope will shed some light
onto our considerations. 

Let $G$ be a finite group acting freely on some sphere $S^{n-1}$, 
where $n \geq 2$ and $(n-1)$ is odd.  Then it follows that $G$ has 
a free resolution of period $n$, and its Tate cohomology 
$\widehat{H}^*(G; \ZZ)$ is periodic of period $n$ ({\it cf.} \cite{Milnor}).

Now given a finite group $G$ of period $n$ one can ask if $G$ can
act freely on a finite $CW$-complex $X$ with $X \simeq S^{n-1}$.

It turns out that there is a finiteness obstruction $\sigma_n(G)$
(introduced by R. Swan in \cite{Sw}) for existence of such an action.  
This obstruction takes value in a certain quotient of 
$\widetilde{K_0}(\ZZ[G])$; {\it i.e.}, in $\widetilde{K_0}(\ZZ[G])/ 
T_G$ ({\it cf.} \cite{DM}).

Computation of these finiteness obstructions is in general a  very 
complicated and quite technical task.  For various groups it was 
carried successfully however by R. J. Milgram and I. Madsen ({\it cf.} 
\cite{Mg}, \cite{Ma}).  Quite extensive calculations were done for 
the class of groups $Q(2^k a, b, c)$, where $a,b,c$ are coprime 
integers, $k \geq 2$ and $Q(2^k a,b,c)$ is given by the semi-direct product
$$1 \to \ZZ_a \times \ZZ_b \times \ZZ_c
\to Q(2^k a, b, c) \to Q(2^k) \to 1$$
in which $Q(2^k)$ is the quaternionic 2-group ({\it cf.} \cite{DM}).  
In particular for $\pi = Q(8p,q,1)$, there are conditions on $p,q$ 
({\it cf.} \cite{Mg}, \cite{Ma}) which imply $\sigma_4(\pi)=0$.

Let $\widetilde{X} \simeq S^3$ be a finite complex with a free action 
of $\pi$ on $\widetilde{X}$.  Then $\widetilde{X}/\pi = X$ 
is a finite 3-dimensional Poincar\'{e} complex and hence it is equipped 
with a Spivak normal bundle ({\it i.e.}, a homotopy spherical fibration; 
{\it cf.} \cite{W1}, \cite{DM}).  Let $f: X \to BSG$ be the classifying 
map for this fibration ({\it cf.} \cite{DM}).

By considering Sylow $p$-subgroups $\pi_p$ of $\pi$ and using the fact 
that there are manifold models (linear space forms) for each lifting $\widetilde{X}_{(p)} 
\simeq S^3 / \pi_p$ one can conclude ({\it cf.} \cite{DM}, \cite{Ma}) that 
$f:X \to BSG$ lifts to $f:X \to BSO$ where $BSO$ is the classifying 
space for oriented bundles.  The existence of such a lifting leads to 
the existence of a normal map
$$f: (M^3, \nu_{M^3}) \to (X, \xi_X)$$
where $\nu$ is the stable normal bundle, and $\xi_X$ is the Spivak 
normal bundle.  Rather intricate and quite lengthy computations 
(see \cite{Be}, \cite{DM}, \cite{Ma}) show that the surgery 
obstruction $\lambda(f) \in L_3^h(\pi)$ is trivial for each of 
the pairs 
$$(p,q)=(3,313), (3,433), (3,601), (7,113), (5,461), (7,809), 
(11,1321), (17,103)~.$$
In dimension 3 this means ({\it cf.} \cite{JK}) that there is a 
manifold $M^3$ and a map $k:M^3 \to X$ which is a $\ZZ[\pi]$-homology 
equivalence (in other words: there is a free action of $\pi$ on 
some integral homology 3-sphere $\widetilde{M}^3$).

{\bf Note. } The functorial splitting of the rational group algebra $Q[Q(8p,q)]$ in \cite{BM},
pp. 452, leads to a corresponding splitting of the real representation ring $R[Q(8p,q)]$.
It follows that the only irreducible {\sf faithful} real representation has dimension $8$.
Consequently, $Q(8p,q)$ is not isomorphic to a subgroup of $\OO{n}$ for $n\leq 7$.

\vspace{1mm}

\noindent{\it Construction of actions in Theorem 2}. 
Let $\pi$ be the group $Q(8p, q)$, where $(p,q)$ is any of the pairs mentioned earlier.  
Let $E_X$ be the total space of a twisted $I$-bundle over $X$ (= the unit disk bundle of 
a real line bundle).

Now if $Q(8)$ is the quaternionic group given by
$$Q(8) = \{ x,y \vert x^4=1, x^2 = y^2, y x y^{-1} = x^{-1} \}$$
let $w: Q(8) \to \ZZ_2$ be a nontrivial orientation homomorphism 
given by
$$w(x) = +1, \; w(y) = -1.$$
It induces a corresponding homomorphism
$$w: Q(8p, q) \to \ZZ_2$$
which in turn yields a specific line bundle, and hence a specific 
choice of $E_X$.

Next, let $E_{M^3}$ be the pull back of $E_X$ by $k$.  Then there is a 
degree one map
$$h:(E_{M^3}, \partial) \to (E_X, \partial)$$
and $h$ is a $\ZZ[\pi]$-homology equivalence between manifold with boundary $E_{M^3}$ and $E_X$.  
It turns out that $\lambda(h) \in L_0^h(\pi, w)$ is trivial ({\it i.e.}, 
$L_0^h(\pi, w) \cong \Gamma_0(\FFF, w)$),
where $\Gamma_0(\FFF, w)$ are the homological surgery
obstruction groups as in \cite{CS}. Here $\FFF: \ZZ[\pi]\rightarrow \ZZ[\pi]$ is the identity 
homomorphism. 

Consequently we can replace the manifold $E_{M^3}$ by a manifold $(W^4, 
\partial)$ homotopy equivalent $(\rel \partial)$ to $(E_X, \partial)$.

In particular, $h_0 = h \vert_{\partial W^4} : \partial W^4 \to 
\partial E_X$ is a $\ZZ[\tau]$-homology equivalence where $\tau \subset 
\pi$ is a subgroup of index two; in fact, $\tau \cong Q(4pq)$.

Now applying the above argument to
$$h_0 \times \id : \partial W^4 \times I \to \partial E_X \times I$$
one obtains a manifold $(N^4, \partial)$ homotopy equivalent to $\partial E_X 
\times I$.  Let
$$\overline{N} = N^4 \cup_\partial N^4 \cup_\partial \ldots.$$
and form $M^4 = W^4 \cup_\partial \overline{N}$.  Then $M^4$ is a one-ended 
manifold with the universal covering $\widetilde{M^4}\approx_{top} S^3 \times\RR$
({\it cf.} \cite{F}).

As a consequence, the two-point compactification of $\widetilde{M^4}$ gives an action of $\pi$ 
on $S^4$; the induced group 
action on the two ``points at infinity'' is given by the homomorphism 
$w$ described above.  This action is fixed-point free (it is 
pseudo-free with two singular points having isotropy group $\tau$).

Consequently we have

\vspace{1mm}

{\bf Fact.}
{\it There is a finite group $\pi$ acting topologically and orientation reversingly on $S^4$ without 
fixed points such that $\pi$ is not isomorphic to a subgroup of $\OO{5}$.
}

\vspace{1mm}

{\bf Remarks.} Starting with the trivial $I$-bundle over $X$ (i.e., with $X\times I$) and repeating the above construction with the trivial orientation homomorphism $w:Q(8p, q) \to \ZZ_2$ one gets

\vspace{1mm}

{\bf Observation.} 
{\it There is a finite group $\pi$ acting topologically and orientation-preservingly on $S^4$ semifreely 
with two fixed points such that $\pi$ is not isomorphic to a subgroup of $\SO{5}$. 
}

\vspace{1mm}

{\bf Note.} Constructions like the ones above were known to the experts right after
Freedman's work on $4$-dimensional topological surgery. 

\vspace{1mm}

The above {\it Fact} gives a topological solution to Conjecture 2. 
It would be interesting to see if one can transform the topological action constructed above
into a locally linear one (and therefore provide counterexamples to Conjecture 2 in \cite{MZ}). 
It is not clear at this moment if this is the case, but we 
describe below a possible strategy for achieving this, namely: 

Let $(p,q)$ be a pair of primes such that $Q(8p,q)$ acts freely on 
a homology 3-sphere $\Sigma^3$.  Write $\pi = Q(8p, q)$ and $\tau = 
Q(4pq)$ where $\tau \subset \pi$ is a subgroup of index two.  

Let $h: \Sigma^3 / \pi \to X^3 / \pi$ be a $\ZZ[\pi]$-homology equivalence, 
where $X^3 / \pi$ is a finite Poincar\'{e} complex with 
$\widetilde{X^3} \simeq S^3$.

Consider the exact homology surgery sequence in dimension 3 
({\it e.g.}, see \cite{JK}):
$$ \xymatrix{
L_0^h(\pi) \ar[r]^-{\gamma} & {\mathcal S}_H(X^3 / \pi) \ar[r]^-{\eta} & 
\noin{X^3/\pi} \ar[r]^-{\Theta_3^h} & L_3^h(\pi) }
$$
The transfer to the 2-fold cover $X/\tau\to X/\pi$ gives a 
commutative diagram
$$ \xymatrix{
L_0^h(\tau) \ar[r]^-{\gamma} & {\mathcal S}_H(X^3 / \tau) \ar[r]^-{\eta} & 
\noin{X^3/\tau} \ar[r]^-{\Theta_3^h} & L_3^h(\tau)\\
L_0^h(\pi) \ar[r]^-{\gamma} \ar[u]_-{tr_*} & {\mathcal S}_H(X^3 / \pi) 
\ar[r]^-{\eta} \ar[u]_-{tr_*} & \noin{X^3/\pi} \ar[r]^-{\Theta_3^h} 
\ar[u]_-{tr_*} & L_3^h(\pi) \ar[u]_-{tr_*} }
$$
Since $X^3 / \tau \simeq S^3 / \tau$ and $S^3 / \tau$ is a manifold we 
have a base point in ${\mathcal S}_H(X^3 / \tau)$.

Let $tr_*(h) = \widetilde{h} : \Sigma^3 / \tau \to S^3 / \tau$ be a 
$\ZZ[\pi]$-homology equivalence and let $[\widetilde{h}] \in {\mathcal S}_H(X^3 /
\tau)$.  We claim that $\eta[\widetilde{h}] = 0$ in $\noin{X^3 / \tau}$.  
Indeed, $\noin{X^3 / \tau} \cong H_1(\tau; \ZZ_2) \cong \ZZ_2$ and since 
$\Theta_3^h$ is a monomorphism (see \cite{JK}), our claim follows.

This means that there is an element $\widetilde{x} \in L_0^h(\tau)$ with 
$[\widetilde{h}] = \gamma{\widetilde{x}}$.

\begin{hyp}\label{hyp:1}
Suppose that there is an element $x \in L_0^h(\pi)$ such that $tr_*(x) = 
\widetilde{x}$.
\end{hyp}

Assuming the above, we act on $[h] \in {\mathcal S}_H(X^3 / \pi)$ by $(-x)$. Then 
we get a new $\ZZ[\pi]$-homology equivalence:
$$\bar{h}: \Sigma'/\pi \to X^3 /\pi$$
with the lifting $\widetilde{\overline{h}}: \Sigma' / \tau \to S^3 / 
\tau$ such that $\widetilde{\overline{h}}$ is $H$-cobordant to 
$\id_{S^3/\tau}$.

Let $\overline{W^4}$ be such an $H$-cobordism
$$(\overline{W^4}; \Sigma'/\tau, S^3/ \tau) = (\overline{W^4}; 
\partial_0, \partial_1).$$
Now in our construction of the one-ended manifold $M^4$ we simply take
$$M^4 = W^4 \cup_{\partial_0} \overline{W^4} \cup_{\partial_1} 
(S^3/\tau) \times [0,\infty).$$
Obviously, $\widetilde{M^4} \approx_{top} S^3 \times \RR$ and the 
action of $\pi$ on the two-point compactification is locally linear.

To be even more specific, we should point out that local linearity of the above actions
is related to the following question, which may be interesting in its own right:

\vspace{1mm}

{\it Suppose that $G$ is a finite group which acts freely on some integral homology 
$3$-sphere $\Sigma^3$, let $H\subset G$ be a subgroup which acts freely on $S^3$, 
and assume that there is a twisted $\ZZ[H]$-homology equivalence from $\Sigma^3/H$
to some spherical space form $L(\Sigma,H)$. Can one find a free $G$-action on some
homology $3$-sphere $\Sigma^\prime$ such that the Atiyah-Singer $\rho$-invariants 
(cf. \cite{ASIII}) of $(\Sigma^\prime,H)$ and $L(\Sigma,H)$ are equal?
}

\vspace{1mm}

If $G$ acts freely and smoothly on some closed oriented $3$-manifold $M^3$, then
the invariant $\rho(M^3,G)$ of \cite{ASIII} is a complex valued function on $G\setminus \{1\}$
which is a rational number times the character of a virtual real representation of $G$.

Analogous questions in higher dimensions are studied in \cite{GT}. 

Assume now that $G=Q(8p,q)$ where $p$ and $q$ are distinct odd primes, and take 
$H$ to be the index $2$ subgroups $Q(4pq)$, which is a generalized quaternion group,
as before, choose $(p,q)$ so that $G$ acts freely on some integral homology $3$-sphere. 
For some choices of $p$ and $q$ --- including $(17, 103)$, $(3,313)$ and $(3,433)$ 
--- an affirmative answer to the $\rho$-invariant question would imply that one can choose 
the group actions constructed above to be locally linear; this uses computational techniques 
from \cite{W2}, \cite{O} and \cite{Ma} together with class number computations for certain 
rings of algebraic integers.

\vspace{2mm}

\noindent{\it Proof of Theorem 2.}
We begin with the following commutative diagram, in which all vertical
and horizontal lines are exact fibration sequences:
$$
\CD
Top/PL @>>> G/PL @ >\varphi'>> G/Top @ >{\overline {k}}>> K(\ZZ_2,4)\\
@V = VV @ V i_1 VV @V i_2 VV @V = VV\\
Top/PL @>>> BPL @ >\varphi >> BTop @ >{k} >> K(\ZZ_2,4)\\
@ VVV @VVV @VVV @VVV\\
\cdot @ >>> BG @ > = >> BG @ >>> \cdot
\endCD
$$
In this diagram $k$ denotes the universal triangulation obstruction
of \cite{KSb}.  Since all spaces in the diagram are $H$-spaces and
all morphisms are $H$-maps ({\it cf. } \cite{KSb}), it follows that
we have the following exact sequence of abelian groups, in which
$M/\partial$ is an abbreviation for $M/\partial M$:
$$ \xymatrix{
[M/\partial, Top/PL] \ar[r] & [M/\partial,G/PL] \ar[r]^-{\varphi'_*} &
[M/\partial, G/Top] \ar[r]^{{\overline {k}}_*} & H^4(M/\partial;\ZZ_2) }
$$
Observe that the group at the extreme left is isomorphic to
$H^3(M/\partial;\ZZ_2)$ and the group at the extreme right is
isomorphic to $\ZZ_2$.

Now the 4-stage Postnikov approximation to $G/Top$ is 
$K(\ZZ,2)\times K(\ZZ,4)$, and the corresponding approximation to
$G/PL$ has classification invariant $\beta \tinycirc \textrm{Sq}^2\in
H^5(K(\ZZ_2,2);\ZZ)$, where $\beta:H^4(-;\ZZ_2)\to
H^5(-;\ZZ)$ is the Bockstein operation for the short exact sequence
$$0~\to~\ZZ~\to~\ZZ~\to~\ZZ_2~\to 0~.$$
The results of \cite{KSb} then imply that the group
\medskip

\centerline{$\mathrm{coker}~\varphi'_*:[M/\partial,G/PL]\to [M/\partial,G/Top]~~\cong$}
\centerline{$\mathrm{image}~{\overline{k}}_*:[M/\partial,G/Top]\to H^4(M/\partial;\ZZ_2)~\cong~\ZZ_2$}

\noindent is equal to the subgroup of $H^4(M/\partial;\ZZ_2)$ given by
$$\mathrm{red}_2\,H^4(M/\partial;\ZZ)~+~\mathrm{Sq}^2 H^2(M/\partial;\ZZ_2)$$
where $\mathrm{red}_2$ denotes the mod 2 reduction operation from 
$\ZZ$ to $\ZZ_2$ coefficients.  It follows that the cokernel of
$\varphi'_*$ is nontrivial and the map ${\overline {k}}_*$ is onto.
Furthermore, if $c:M/\partial \to S^4$ is a map with mod 2 degree equal
to 1 which collapses the closed complement of a coordinate disk
$D^4\cong D\subset \mathrm{Int}(M)$ to a point, then it follows that
the composite
$$ \xymatrix{
\ZZ\cong\pi_4(G/Top) \ar[r]^-{c^*} & 
[M/\partial, G/Top] \ar[r]^{{\overline {k}}_*} & 
H^4(M/\partial;\ZZ_2)\cong\ZZ_2 }
$$
is onto.

It is worthwhile to point out that $c^*(1)$ is the normal invariant of $(M\# |E_8|,\partial)\to (M,\partial)$ and this map is normally bordant to a homotopy equivalence. Here $|E_8|$ is the manifold constructed by M. Freedman in \cite{F}.

\begin{claima}
If $w: Q(8p, q) \to \ZZ_2$ is the previously described 
homomorphism, then the surgery obstruction
map $\sigma: [M/\partial, G/Top] \to L_0^h(Q(8p,q),w)$ is
trivial on the image of $c^*$.  
\end{claima}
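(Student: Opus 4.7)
First, I observe that the collapse map $c : M/\partial \to S^4$ has simply-connected target, so it induces the trivial homomorphism on fundamental groups. Naturality of the surgery obstruction under this collapse then yields the factorization
\[
\sigma \circ c^* \;=\; i_* \circ \sigma_{S^4} \;:\; \pi_4(G/Top) \longrightarrow L_0^h(\ZZ[\pi], w),
\]
where $\pi = Q(8p,q)$, $\sigma_{S^4} : \pi_4(G/Top) \to L_0(\ZZ) \cong \ZZ$ is the standard simply-connected surgery obstruction (an isomorphism onto the subgroup generated by the $E_8$ class), and $i_* : L_0(\ZZ) \to L_0^h(\ZZ[\pi], w)$ is induced by the inclusion $\ZZ \hookrightarrow \ZZ[\pi]$. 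This matches the geometric observation noted just before the claim: $c^*(1)$ is the normal invariant of the degree-one map $(M \# |E_8|, \partial) \to (M, \partial)$, whose surgery obstruction is precisely $i_*[E_8]$. Proving the claim therefore reduces to showing that $i_*[E_8] = 0$ in $L_0^h(\ZZ[\pi], w)$, i.e.\ that the form $E_8 \otimes_\ZZ \ZZ[\pi]$ equipped with the twisted involution $\bar{g} = w(g)g^{-1}$ is Witt-trivial.

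Second, to establish $i_*[E_8] = 0$, I would exploit the Wedderburn decomposition $\QQ[\pi] = \prod_\alpha A_\alpha$ together with the action of the $w$-twisted involution. This involution permutes the Wedderburn components; any pair of components swapped by the involution contributes a hyperbolic (hence trivial) summand to $L_0^h$. Accordingly, only components fixed by the twisted involution can carry nontrivial contributions, and the problem reduces to analyzing each fixed component separately. For $\pi = Q(8p,q)$ with $\ker w = \tau = Q(4pq)$, one checks that the non-trivial one-dimensional characters of $\pi$ are permuted nontrivially in pairs by the rule $\chi \mapsto w \cdot \bar{\chi}$: the abelianization of $Q(8p,q)$ is the Klein four group generated by the images of $x$ and $y$, and $w$ itself is one of the non-trivial characters, so no one-dimensional character is self-dual under $w$-twisting. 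Consequently all one-dimensional character contributions vanish by the swap argument.

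Third, the remaining and most delicate work lies in the higher-dimensional Wedderburn components fixed by the twisted involution, in particular the component containing the $8$-dimensional faithful irreducible real representation of $Q(8p,q)$ singled out in the Note preceding the construction. For each such fixed component, I would show the restriction of $E_8 \otimes \ZZ[\pi]$ is Witt-trivial either by constructing an explicit Lagrangian over the relevant division algebra (exploiting a self-anti-dual element of $\ZZ[\pi]$ coming from some $y \in \pi$ with $w(y) = -1$, such as $y + y^{-1}$ viewed in the $w$-twisted algebra) or by invoking the detailed $L_0^h$-computations for $Q(8p,q)$ contained in \cite{W2}, \cite{O}, \cite{Ma}, the same computations that underlie the vanishing of the surgery obstructions $\lambda(f)$ and $\lambda(h)$ used earlier. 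I expect this third step to be the main obstacle, since it depends on the intricate structure of the involution-fixed summands of $\QQ[\pi]$ and the specific arithmetic of the faithful $8$-dimensional representation; the first two steps, by contrast, are essentially formal consequences of naturality and the geometry of the collapse map.
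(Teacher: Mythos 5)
Your first step is exactly the paper's: by construction the surgery problem representing $c^*(x)$ has obstruction $i_*\sigma(f)$, where $i_*\colon L_0(1)\to L_0^h(Q(8p,q),w)$ is induced by the ring map $\ZZ\to\ZZ[Q(8p,q)]$, so the Claim reduces to showing that $i_*$ kills the $E_8$ generator. From there, however, your plan has a genuine gap. For this nonorientable antistructure the relevant obstruction group is pure $2$-torsion --- the paper works with $L_0^h(Q_8,w')\cong\ZZ_2\oplus\ZZ_2$ --- so a nonzero image of $[E_8]$ would be a torsion class invisible to the rational Wedderburn decomposition. Showing that the involution swaps components in pairs, or exhibiting a Lagrangian for the restriction of $E_8\otimes\ZZ[\pi]$ to each fixed component of $\QQ[\pi]$, only proves triviality in $L_0(\QQ[\pi],w)$ (equivalently, vanishing of multisignature-type invariants); it says nothing about the kernel of $L_0^h(\ZZ[\pi],w)\to L_0(\QQ[\pi],w)$, which is precisely where a nontrivial image of $L_0(1)$ would live. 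This is the ``oozing'' phenomenon studied in \cite{CS2}, and it cannot be settled by the semisimple algebra of $\QQ[\pi]$ alone; so your third step --- which you acknowledge is the main obstacle --- is not merely incomplete, but as outlined would not suffice even if carried out.

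What the paper does instead, and what your outline is missing, is: (1) reduce to the Sylow $2$-subgroup $Q_8$ of $Q(8p,q)$, where the image of $L_0(1)$ is detected, and use $Wh(Q_8)=0$ to pass from $L^s$ to $L^h$; (2) invoke the computation $L_0^h(Q_8,w')\cong\ZZ_2\oplus\ZZ_2$ together with two explicit detecting homomorphisms for the two summands --- the composite $L_0^h(Q_8,w')\to L_0^p(Q_8,w')\to L_0(\ZZ_2,-)$ through the projective Wall group, and the map to the relative group $L_0^h(Q_8,\ZZ_4;w')$, which is injective on the image of the Rothenberg connecting map $\delta$; (3) check that the image of $L_0(1)$ dies under both, the first by Wall's computation of $L_0(1)\to L_0(\ZZ_2,-)$ and the second because $i_*$ factors through $L_0(\ZZ_4)\to L_0^h(Q_8,w')$, whose composite with the map to the relative group vanishes by exactness of the Cappell--Shaneson sequence. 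The Sylow reduction and the identification of concrete invariants detecting the $2$-torsion are the essential content of the proof, and neither appears in your proposal.
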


Before proving the claim, we shall explain how it implies Theorem 2.  
There are two cases depending upon whether or not
the relative Kirby-Siebenmann obstruction for the pair $(M,\partial M)$
is trivial or nontrivial (note that the boundary is uniquely smoothable 
and in fact is a spherical space form).  In both cases, one crucial
point is the fact that if $h:(N,\partial N)\to (M,\partial M)$ is a
homotopy equivalence which is a diffeomorphism on the boundary and
the normal invariant of $h$ is the nontrivial element $x$ in the image 
of $c^*$, then the relative Kirby-Siebenmann invariants of 
$(N,\partial N)$ and $(M,\partial M)$ are opposites because their
difference is the nontrivial class ${\overline {k}}_*\tinycirc c^*(x)$.

Suppose first that the relative Kirby-Siebenmann invariant of 
$(M,\partial M)$ is nontrivial, and let $h$ be as above.  Then by the
last sentence of the preceding paragraph the relative invariant of
$(N,\partial N)$ is trivial.  The boundary of $N$ is a spherical
space form for the group $K = \mathrm{Kernel}(w)$; let $V$ be
the associated 4-dimensional linear representation of that group,
and let $D(V)$ denote its unit disk.  If $\Sigma^4$ is formed
by equivariantly identifying the boundaries of ${\widetilde {N}}$
and $Q(8p,q)\times_K D(V)$, then it follows that $\Sigma^4$ is
a locally linear pseudofree $Q(8p,q)$-manifold, and the direct sum of
its topological equivariant tangent bundle in the sense of \cite{LR} 
with a trivial 1-dimensional bundle comes from an equivariant
vector bundle.  Therefore the results of \cite{LR} imply that
$\Sigma^4\times\RR^k$ is equivariantly smoothable for all $k\geq 1$.

In contrast, if the relative Kirby-Siebenmann invariant  of
$(M,\partial M)$ is trivial and $h$ is as above,  then we may
again construct $\Sigma^4$ as in the preceding paragraph; if
the direct sum of its topological equivariant tangent bundle 
in the sense of \cite{LR} with a trivial $k$-dimensional bundle 
does {\sf NOT} come from an equivariant vector bundle for all 
$k\geq 1$, then the results of \cite{LR} and \cite{LT} will imply that no
product of the form $\Sigma^4\times\RR^k$ can be equivariantly 
smoothable.  We need to show this tangent bundle assertion
holds if the Kirby-Siebenmann invariant of $(M,\partial M)$ is 
trivial, which means that the corresponding invariant for
$(N,\partial N)$ is not trivial.  We shall do this by assuming
the latter holds but $\Sigma^4\times\RR^k$ is nevertheless 
equivariantly smoothable and deriving a contradiction.

Suppose that a product as above is smoothable; without loss of 
generality we may assume that $k\geq 5$.  Since topological and linear
equivalence are the same for semifree representations ({\it e.g.}, see \cite{HP},
Corollary 3.10),  it follows that an invariant tubular 
neighborhood $E$ of the singular set must be equivariantly diffeomorphic to 
$$\RR^k\times \left(\,Q(8p,q)\times_K D(V)\,\right)$$ 
where $K$ and $V$ are defined as above.  Let ${\widetilde{W}}$ be 
the closure of the complement of such an invariant tubular neighborhood,
and let $W$ be its (smooth) orbit space.  It follows immediately that
the relative Kirby-Siebenmann obstruction for $(W, \partial W)$ is 
trivial.  

On the other hand, we claim that this obstruction
is equal to the relative Kirby-Siebenmann obstruction for 
$(N,\partial N)\times \RR^k$.  To see this, first observe that we can 
choose the invariant smooth tubular neighborhood $E$
to be so small that it lies in the interior of 
$$\left(Q(8p,q)\times_K D(V)\,\right)
\times\RR^k \subset \Sigma^4\times\RR^k$$
(viewing the left side as a subset of the right via the construction
of $\Sigma^4\times\RR^k$).  Now the region between $\partial E$ 
and $Q(8p,q)\times_K S(V)\times\RR^k$ is a proper equivariant 
$s$-cobordism ({\it e.g.}, this follows from \cite{Sieb}), and this fact
allows us to extend the linear structure on the topological 
equivariant tangent bundle of $E$ (given by smoothability) to a
linear structure on the equivariant topological tangent bundle of
$\left(Q(8p,q)\times_K D(V)\,\right)
\times\RR^k$.  It is natural to ask whether
or not this linear structure coincides with the standard one
given by product constructions on the equivariant tangent bundle
of $D(V)$, at least up to homotopy; fortunately, this follows 
directly from the $K$-equivariant contractibility of 
$D(V)\times \RR^k$ and the fact that the local representation
of $K$ at its fixed points is $V\times \RR^k$ in all cases. On the 
orbit space level this compatibility of linear equivariant tangent 
bundle structures implies that 
the relative Kirby-Siebenmann invariants for $(W,\partial W)$ and
$(N,\partial N)\times\RR^k$ must be equal, and as noted in an earlier 
paragraph this contradiction implies that no product of the form 
$\Sigma^4\times\RR^k$ can be equivariantly smoothable.

To summarize, regardless of whether or not the original action on
$S^4$ is stably smoothable, we have constructed a second action 
which has the opposite property.

\begin{proof}[Proof of Claim.]
Let $D\subset \mathrm{Int}(M)$ be an embedded closed 4-disk which 
extends to a slightly larger open 4-disk, so that $M-\mathrm{Int}(D)$
is a manifold with boundary.  One can construct a relative surgery 
problem ({\it i.e.}, a homeomorphism on the boundary) with target $M$
and surgery obstruction $c^*(x)\in [M/\partial,G/Top]$ as follows:
Start with a relative topological surgery problem $(f,\Phi)$ with
target $D^4$ which corresponds to a generator 
$x\in \pi_4(G/Top)\cong\ZZ$.  Now construct a relative surgery
problem $(f',\Phi')$ with target $M$ from $(f,\Phi)$ and the identity 
on $M-\mathrm{Int}(D)$ by identifying the homeomorphism to 
$\partial D^4$ in the first problem with the identity on
$\partial D$ in the second.  A direct analysis of this construction
shows that the resulting surgery problem represents $c^*(x)$ and
that the surgery obstruction is given by $\sigma(f') = i_*\sigma(f)$,
where 
$i_*:L_0(1)~\longrightarrow~L_0^s(Q(8p,q),w)$ is induced by
the canonical homomorphism $\ZZ\to\ZZ[Q(8p,q)]$ of rings with unit.
Therefore the proof of the Claim reduces to showing that $i_*$
is trivial.  We can reduce things further as follows:  The 
Sylow 2-subgroup of $Q(8p,q)$ is the quaternionic group $Q_8$, so
it suffices to prove that $L_0(1)\to L_0^s(Q_8,w')$ is trivial,
where $w'$ is the restriction of $w$.  Since the forgetful
map from $L_0^s(Q_8,w')$ to $L_0^h(Q_8,w')$ is an 
isomorphism (because ${Wh}(Q_8) = 0$), it actually suffices
to check that the map  $j_*:L_0(1)\to L_0^h(Q_8,w')$ is trivial.

The results of \cite{W2} and \cite{CS2} show that $L_0^h(Q_8,w')$ is
isomorphic to  $\ZZ_2\oplus\ZZ_2$.  Furthermore, the results of
\cite{H0}, pp. 122, show that one summand is detected by the composite
$$ \xymatrix{
L_0^h(Q_8,w') \ar[r]^-{f} & L_0^p(Q_8,w') \ar[r]^-{w_*} & 
L_0(\ZZ_2,-)\cong\ZZ_2 }
$$
where $f$ is the forgetful map from homotopy to projective Wall groups
(see \cite{PR}) and $w_*$ is induced by the group homomorphism
$w':Q_8\to \ZZ_2$; we can suppress the superscipt on the third
Wall group in the display because ${Wh}(\ZZ_2)$ and ${\widetilde
{K_0}}(\ZZ[\ZZ_2])$ are both trivial.  In fact, the results of \cite{H0}
show that $w_*$ is an isomorphism and the map $f$ is onto; one
of the $\ZZ_2$ summands in $L_0^h(Q_8,w')$ maps onto the generator
of $L_0^p(Q_8,w')$, and the other is given by the kernel of $f$.
By the Rothenberg exact sequence relating $L^h$ and $L^p$ (see 
\cite{PR}), this kernel is the image of the connecting homomorphism 
$$\delta:\ZZ_2\cong
H^*\left(\ZZ_2;{\widetilde{K_0}}(\ZZ[Q_8])\,\right)\to
L_0^h(Q_8,w')$$
which is injective.  Furthermore, if $b:\ZZ_4\to Q_8$ denotes the
inclusion of the kernel of $w'$, then the results described on 
page 633 of \cite{CS2}
imply that the image of $\delta$ maps nontrivially under the 
homomorphism
$$L_0^h(Q_8,w')\to L_0^h(Q_8,\ZZ_4;w')$$
in the long exact sequence of $L^h$ groups associated to the inclusion
homomorphism $b$.  Therefore, the triviality of the mapping $j_*$
is equivalent to the triviality of both $L_0(1)\to L_0(\ZZ_2,-)$
and the composite
$$L_0(1)\longrightarrow L_0^h(Q_8,w')\longrightarrow 
L_0^h(Q_8,\ZZ_4;w')~.$$

The first of these maps is trivial by results of Wall (see \cite{W1},
pp. 173, and \cite{W66}, (4.13, Complement), for details).  To prove
the triviality of the second map, observe that $j_*$ factors as
a composite
$$ \xymatrix{
L_0(1) \ar[r] & L_0(\ZZ_4) \ar[r]^-{b_*} & L_0^h(Q_8,w') }
$$
and by \cite{CS2} there is an exact sequence
$$ \xymatrix{
& L_0(\ZZ_4) \ar[r]^-{b_*} & L_0^h(Q_8,w') \ar[r] & 
L_0^h(Q_8,\ZZ_4;w') \ar[r] & 0~. }
$$
The combination of these statements imply that $L_0(1)\to
L_0^h(Q_8,\ZZ_4;w')$ is trival, and as noted above this
completes the proof of the claim.
\end{proof}

\vspace{3mm}

{\sl Acknowledgment. } The first named author is partially supported by
NSF grant DMS-1065784 and the second named author by Simons Foundation grant 
No. 281810. Part of this work was initiated when the first and the second 
named authors attended the FRG workshop ``The Topology and Invariants of Smooth 
$4$-manifolds" in Miami, March 12-16, 2012. We thank the organizer Nikolai Saveliev
for his hospitality and are grateful for the financial support through the NSF FRG grant 
DMS-1065905 and the University of Miami. All authors would like to thank Rafal 
Komendarczyk for the helping hand with the typesetting. Finally, the authors would 
like to thank the referee for the many suggestions which have improved considerably 
the exposition of the paper. In particular, 
we would like to thank the referee for directing our attention to Problem 2.

\vskip.25in
\begin{tabular}{lll}
Weimin Chen&&S{\l}awomir Kwasik \\
Department of Math.\& Stat.&&Department of Math. \\
University of Massachusetts &&Tulane University \\
Amherst, MA 01003 &&New Orleans, LA 70118 \\
{\tt wchen@math.umass.edu}&&{\tt kwasik@tulane.edu} \\
\end{tabular}
\begin{tabular}{lll}
Reinhard Schultz \\ 
Department of Math.\\
University of California \\
Riverside, CA 92521 \\
{\tt schultz@math.ucr.edu}
\end{tabular}

\begin{thebibliography}{1}

\bibitem{ASIII} M.F. Atiyah and I.M. Singer, The index of elliptic operators III, Ann. of Math.
87 (1968), 546-604.

\bibitem{SmithConj} H. Bass and J. Morgan ({\it Eds.}), The Smith 
Conjecture: Papers presented at the symposium held at Columbia 
University, New York, 1979. Pure and Applied Mathematics, 112. Academic 
Press, Orlando, FL, 1984.

\bibitem{Be} S. Bentzen, Some numerical results on space forms, Proc. 
London Math. Soc. (3) 54 (1987), 559--576.

\bibitem{BM} S. Bentzen and J. Madsen, On the Swan subgroups of certain periodic 
groups, Math. Ann. 264 (1983), 447-474. 

\bibitem{BLP} M. Boileau, B. Leeb and J. Porti, Geometrization of
$3$-dimensional orbifolds, Ann. Math. 162 (2005), 195--250.

\bibitem{bredon} G. E. Bredon, Introduction to Compact Transformation 
Groups. Pure and Applied Mathematics, Vol.  46. Academic Press, 
New York London, 1972.

\bibitem{CS} S. Cappell and J. Shaneson, The codimension two placement 
problem and homology equivalent manifolds, Ann. Math. 99 (1974), 277--348.

\bibitem{CS2} S. Cappell and J. Shaneson, A counterexample on the
oozing problem for closed manifolds. Algebraic topology, Aarhus 1978
(Proc. Sympos., Univ. Aarhus, Aarhus, 1978), pp. 627--634, Lecture
Notes in Math., 763, Springer, Berlin, 1979.

\bibitem{DM} J. Davis and R. Milgram, "A survey of the Spherical Space 
Form Problem", Math. Reports Vol.2, Part 2, Harwood Academic Publishers. 
London, 1985, pp. 223-283

\bibitem{DL} J. Dinkelbach and B. Leeb, Equivariant Ricci flow with
symmetry and applications to finite group actions on $3$-manifolds.
Geom. Top. 13 (2009), 1129--1173.

\bibitem{F} M. Freedman, The topology of four-dimensional manifolds, 
J. Diff. Geom. 17 (1982), 357--453.

\bibitem{GT} H. Geiges and C.B. Thomas, Almost linear actions by finite groups on
$S^{2n-1}$, Geometry and Topology Monographs 2(1999), 135-156.

\bibitem{H0} I. Hambleton, Projective surgery obstructions on closed 
manifolds.  Algebraic $K$-theory, Part II (Oberwolfach, 1980), pp. 
101--131, Lecture Notes in Math., 967, Springer, Berlin-New York, 1982.

\bibitem{HP} I. Hambleton and E. K. Pedersen, Topological equivalence
of linear representations for cyclic groups: I.  Ann. of Math. 161
(2005), 61--104.

\bibitem{H} A. Hatcher, A proof of the Smale Conjecture 
$\Diff(S^3) \cong \OO{4}$, Ann. Math 117 (1983), 553--607.

\bibitem{JK} B. Jahren and S. Kwasik, Three-dimensional surgery theory, 
$UNil$-groups and the Borel conjecture, Topology 42 (2003), 1353--1369.

\bibitem{KSb}  R. C. Kirby and L. C. Siebenmann, Foundational Essays on
Topological Manifolds, Smooth\-ings, and Triangulations, Ann. of Math. 
Studies No. 88, Princeton Univ. Press, 1977.

\bibitem{KwLee} S. Kwasik and K. B. Lee, Locally linear actions on
$3$-manifolds, Math. Proc. Camb. Phil. Soc. 104 (1988), 253--260.

\bibitem{LR} R. Lashof and M. Rothenberg, $G$-smoothing theory, 
Algebraic and Geometric Topology (Stanford, 1976), Proc. Sympos. Pure 
Math. Vol. 32 Pt. 1, 211--266.  American Mathematical Society, Providence, 
R.I., 1978.

\bibitem{LT} R. Lashof and L. Taylor, Smoothing theory and
Freedman's work on $4$-manifolds. Algebraic topology, Aarhus 1978
(Proc. Sympos., Univ. Aarhus, Aarhus, 1982), pp. 271--292, Lecture
Notes in Math., 1051, Springer, Berlin, 1984.

\bibitem{Ma} I. Madsen, Reidemeister torsion, surgery invariants and 
spherical space forms, Proc. London Math. Soc (3) 46 (1983), 193--240.

\bibitem{MZ0} M. Mecchia and B. Zimmermann, On finite simple and non solvable 
groups acting on homology $4$-spheres, Topology Appl. 
153 (2006), 2933--2942.

\bibitem{MZ} M. Mecchia and B. Zimmermann, On finite groups acting on 
homology 4-spheres and finite subgroups of $\SO{5}$, Topology Appl. 
158 (2011), 741--747.

\bibitem{Mg} R. J. Milgram, Evaluating the Swan finiteness obstruction 
for finite groups, in: Algebraic and Geometric Topology, Lecture Notes
in Mathematics Vol.  1126, Springer, Berlin 1985, pp. 127--158.

\bibitem{Milnor} J. Milnor, Groups which act on $S\sp n$ without fixed 
points. Amer. J. Math. 79 (1957), 623--630. 

\bibitem{MT2} J. Morgan and G. Tian, Ricci flow and the Poincar\'e
Conjecture.  Clay Mathematics Monographs, No. 3.  Amer. Math. Soc.,
Providence RI, and Clay Math. Inst., Cambridge MA, 2007.

\bibitem{O} R. Oliver, $SK_1$ for finite group rings: I, Invent. Math. 
57 (1980), 183--204.

\bibitem{PR}  E. K. Pedersen and A. A. Ranicki, Projective surgery
theory, Topology 19 (1980), 239--254.

\bibitem{RS} D. Randall and P. Schweitzer, 
On foliations, concordances and the Smale Conjectures.
Differential Topology, Foliations, and Group Actions
(Rio de Janeiro, 1992), 235--258, Contemp. Math., 161,
Amer. Math. Soc., Providence, R.I., 1994.

\bibitem{Sieb} L. C. Siebenmann, Infinite simple homotopy types. Nederl. 
Akad. Wetensch. Proc. Ser. A 73 = Indag. Math. 32 (1970), 479--495.

\bibitem{Su} M. Suzuki, Group Theory II, Springer, Berlin, 1982. 

\bibitem{Sw} R. Swan, Periodic resolutions for finite groups, Ann. 
Math. 72 (1960), 267--291.

\bibitem{Th} C. B. Thomas, Classification of free actions by some 
metacyclic groups on $S^{2n-1}$, Ann. Scient. \'Ec. Norm. Sup. 13 
(1980), 405--418.

\bibitem{W66} C. T. C. Wall, Surgery of non-simply connected
manifolds, Ann. Math. 84 (1966), 217--276.

\bibitem{W2} C. T. C. Wall, Classification of Hermitian forms VI, Group 
Rings, Ann. Math. 103 (1976), 1--80.

\bibitem{W1} C. T. C. Wall. Surgery on Compact Manifolds. Second edition.
Edited and with a foreword by A. A. Ranicki. Mathematical Surveys and
Monographs, No. 69. American Mathematical Society, Providence, R. I., 1999.

\bibitem{Z2005} B. Zimmermann, Some results and conjectures on
finite groups acting on homology spheres, Sib. Elektr. Mat. Izv. 
2 (2005), 233--238.

\end{thebibliography}
\end{document}